\documentclass[11pt]{amsart}

\usepackage[all]{xy}
\usepackage{url}
\usepackage{amssymb}
\usepackage{hyperref}
\usepackage[margin=1.2in]{geometry}
% Theorem environments.
%
\theoremstyle{plain}
\newtheorem{theorem}[subsection]{Theorem}
\newtheorem{proposition}[subsection]{Proposition}
\newtheorem{lemma}[subsection]{Lemma}

\newtheorem{claim}[subsection]{Claim}

\theoremstyle{definition}
\newtheorem{definition}[subsection]{Definition}

\newtheorem{remark}[subsection]{Remark}

\newtheorem{Theorem}[subsubsection]{Theorem}
\newtheorem{Proposition}[subsubsection]{Proposition}

\newtheorem{Corollary}[subsubsection]{Corollary}

\theoremstyle{definition}
\newtheorem{Definition}[subsubsection]{Definition}

\newtheorem{Remark}[subsubsection]{Remark}

\newenvironment{customthm}[1]
  {\innercustomthm}
  {\endinnercustomthm}

\numberwithin{equation}{subsubsection}
\numberwithin{equation}{subsubsection}% Euler for math | Palatino for rm | Helvetica for ss | Courier for tt
\normalfont
\usepackage[T1]{fontenc}

\usepackage{tikz-cd}

\makeatletter
\def\@tocline#1#2#3#4#5#6#7{\relax
  \ifnum #1>\c@tocdepth % then omit
 \else
    \par \addpenalty\@secpenalty\addvspace{#2}%
    \begingroup \hyphenpenalty\@M
    \@ifempty{#4}{%
      \@tempdima\csname r@tocindent\number#1\endcsname\relax
    }{%
      \@tempdima#4\relax
    }%
    \parindent\z@ \leftskip#3\relax \advance\leftskip\@tempdima\relax
    \rightskip\@pnumwidth plus4em \parfillskip-\@pnumwidth
    #5\leavevmode\hskip-\@tempdima
      \ifcase #1
       \or\or \hskip 1em \or \hskip 2em \else \hskip 3em \fi%
      #6\nobreak\relax
    \dotfill\hbox to\@pnumwidth{\@tocpagenum{#7}}\par
    \nobreak
    \endgroup
  \fi}
\makeatother
\usepackage{hyperref}

% \usepackage{biblatex} 
%\bibliography{Biblio Local Coho Val Ring}

\pagestyle{plain}

\title{(Non)Vanishing results on local cohomology of valuation rings}
\author{Rankeya Datta}
\address{Department of Mathematics, University of Michigan, 2074 East Hall, 530 Church Street, Ann Arbor, MI 48109}
\email{rankeya@umich.edu}
\date{}                                           % Activate to display a given date or no date
%\subsection{}
\begin{document}
\maketitle

\begin{abstract}
We examine local cohomology in the setting of valuation rings. The novelty of this investigation stems from the fact that valuation rings are usually non-Noetherian, whereas local cohomology has been extensively developed mostly in a Noetherian setting. Various vanishing results on local cohomology for valuation rings of finite Krull dimension are obtained, and a uniform bound on the global dimension of such rings is established. Our investigation reveals differences in the sheaf theoretic definition of local cohomology, and the algebraic definition in terms of a limit of certain Ext functors. 
\end{abstract} 

\vspace{1mm}

\renewcommand{\baselinestretch}{0.75}\normalsize
\tableofcontents
\renewcommand{\baselinestretch}{1.0}\normalsize

\section{\textbf{Introduction}}
In this paper we study local cohomology of valuation rings. Since such rings are usually non-Noetherian, some caution is required in what one means by local cohomology. We adopt Grothendieck's definition \cite{GH67} -- the derived functors of sections of a sheaf of abelian groups on a space with support in a closed set are called \emph{local cohomology functors}. The generality of this definition often necessitates Noetherian restrictions in applications of local cohomology to algebraic geometry and commutative algebra. Indeed, local cohomology has proved to be a potent tool for understanding Noetherian schemes, and hence also Noetherian rings (see \cite{ILLMMS07} for a range of applications).  Nonetheless, there have been efforts to clarify when Noetherian hypotheses are necessary, in order to be able to apply this machinery to arbitrary schemes (for instance, Gabber-Ramero \cite{GR04} and Schenzel \cite{Sch03}).

\vspace{1mm}

In commutative algebra, local cohomology with respect to an ideal $I$ of a ring $A$ is usually defined as a limit of \textrm{Ext} functors (see \cite{HT07}, \cite{Lip02}, \cite{BrSh13}) -- more precisely as the right derived functors of the \emph{$I$-torsion functor}\footnote{This terminology is borrowed from \cite[Chapter 7]{ILLMMS07}. In \cite{Lip02}, $\Gamma_I$ is more accurately called the `$I$-power torsion functor'.}, $\Gamma_I$, where for a $A$-module $M$
$$\Gamma_I(M) = \{x \in M: \exists n \in \mathbb{N} \hspace{1mm} \textrm{such that} \hspace{1mm} I^nx = 0\}.$$
The derived functors of $\Gamma_I$ are also given the name `local cohomology' because the sheaf theoretic and algebraic definitions give isomorphic cohomology modules on Noetherian affine schemes  \cite[Exercise III.3.3]{Har77}. However, we show that such isomorphisms fail when the ring $A$ is a valuation ring (Proposition \ref{local and torsion cohomology don't agree}), affirming the need for caution in what one means by local cohomology in a non-Noetherian setting. For this reason, we henceforth call the derived functors of $\Gamma_I$ \emph{$I$-torsion cohomology}. %we show that $m$-torsion cohomology on the category of modules over a valuation ring $V$, and local cohomology on the category of quasi-coherent sheaves on Spec($V$) with support in $\{m\}$ give non-isomorphic $V$-modules in degree $0$, in contrast to the Noetherian case. 

%\vspace{1mm}

 %To simplify the statements of our main results we assume $V$ has finite Krull dimension, although we do not restrict ourselves to rings of finite dimension in the paper. Since any valuation ring of the function field of an algebraic variety has finite Krull dimension, such rings already constitute a rich class. 
\vspace{1mm}

\textbf{Results:} The main results of this paper are summarized, although, for simplicity, not always in complete generality. Most of the vanishing results are obtained for valuation rings of finite Krull dimension. Since any valuation ring of the function field of an algebraic variety over the ground field has finite Krull dimension, such rings already constitute a rich and interesting class.

In the remainder of the paper, $V$ denotes a valuation ring with maximal ideal $\frak m$. We first examine torsion cohomology of valuation rings. The behavior of the $\frak m$-torsion cohomology functors is governed by whether $\frak m$ is principal: 

\begin{customthm}{\ref{derived torsion characterization}}
Let $M$ be a $V$-module.
\begin{enumerate}
\item[(1)] If $\frak m$ is principal, then $R^i\Gamma_\frak{m}(M) = 0$ for all $i \geq 2$, and $R^1\Gamma_\frak{m}(M)$ is the cokernel of the canonical map $M \rightarrow M_f$, where $f$ is a generator of $\frak m$.
\item[(2)] If $\frak m$ is not principal, then $R^i\Gamma_\frak{m}(M) \cong \textrm{Ext}^i_V(V/\frak{m}, M)$ for all $i \geq 0$. %In particular, $\Gamma_m(M) = \{x \in M: mx = 0\}$, i.e., $\Gamma_m(M)$ is the socle of $M$.
\end{enumerate}
\end{customthm}

%When $m$ is principal, torsion cohomology of any $V$-module vanishes in degrees $> 1$, as one would expect. When $m$ is not principal, torsion cohomology coincides with the functors $\textrm{Ext}^i_V(V/m, \_ )$ (Lemma \ref{value groups with least positive element} and Theorem \ref{derived torsion characterization}). 
\noindent Since the functors $\textrm{Ext}^i_V(V/\frak m, \_ )$ are influenced by the projective dimension of the residue field $V/\frak m$, we examine the latter in Section \ref{finiteness of projective dim}. We show that the projective dimension of $V/\frak m$ is at most 2 when $V$ has finite Krull dimension (Theorem \ref{projective dimension of valuation rings if finite Krull dim}), which gives vanishing of $\frak m$-torsion cohomology in degrees $\geq 3$ even when $\frak m$ is \emph{not} principal (Corollary \ref{vanishing of torsion cohomology for finite dimensional valuation rings}).
%\begin{customthm}{\ref{projective dimension of valuation rings if finite Krull dim}}
%Let $V$ be a valuation ring of finite Krull dimension. The projective dimension of $V/m$ is always $\leq 2$.
%\begin{enumerate}
%\item[(1)] The projective dimension of $V/m$ is $1$ if and only if $m$ is principal.
%\item[(2)] The projective dimension of $V/m$ is always $\leq 2$.
%\end{enumerate}
%\end{customthm}

%\noindent As a consequence, we get vanishing of $m$-torsion cohomology for valuation rings of finite Krull dimension in degrees $> 2$ when $m$ is \emph{not} principal (Corollary \ref{vanishing of torsion cohomology for finite dimensional valuation rings}).

\vspace{1mm}

The results of Section \ref{finiteness of projective dim} generalize -- for an arbitrary ideal $I$ of a valuation ring $V$ of finite Krull dimension, the projective dimension of $V/I$ is at most 2. As a result, the following bound on global dimension is obtained:

\begin{customthm}{\ref{bound on global dimension}}
The global dimension of a valuation ring $V$ of finite Krull dimension is $\leq 2$.
%\begin{enumerate}
%\item[(1)] The global dimension of $V$ equals 1 if and only if $V$ is a discrete valuation ring.
%\item[(2)] The global dimension of $V$ is always $\leq 2$.
%\end{enumerate}
\end{customthm}

 %Finiteness of global dimension characterizes regular local rings among Noetherian local rings. So in some sense valuation rings of finite Krull dimension behave like regular local rings. Further similarities between regular local rings and valuation rings, especially in characteristic $p > 0$, are illustrated in \cite{Dat}. 

\noindent A simple consequence of the finiteness of global dimension is the vanishing of $I$-torsion cohomology in degrees $\geq 3$ (see Theorem \ref{vanishing of torsion cohomology with respect to an arbitrary ideal}). Moreover, we show in Proposition \ref{derived torsion cohomology does not equal local cohomology part 1} that $3$ is an optimal lower bound for triviality of torsion cohomology.  %Since any valuation ring of the function field of an algebraic variety has finite Krull dimension, most valuation rings of interest in algebraic geometry have vanishing $I$-torsion cohomology in degrees $> 2$. %(Theorem \ref{vanishing of torsion cohomology with respect to an arbitrary ideal}).

\vspace{1mm}

We next examine local cohomology of sheaves on Spec($V$), proving the following:

\begin{customthm}{\ref{vanishing theorem for local cohomology for valuation rings}}
Let $X = \textrm{Spec}(V)$, $Z \subseteq X$ a closed set, and $U = X - Z$. For a sheaf of abelian groups $\mathcal{F}$ on $X$, let $H^i\Gamma_Z(\mathcal{F})$ denote the $i^{th}$ local cohomology of $\mathcal{F}$ with support in $Z$. Then
\begin{enumerate}
\item[(1)] $H^i\Gamma_Z(\mathcal{F}) \cong H^{i-1}(U, \mathcal{F}|_U)$ for all $i > 1$, and $H^1\Gamma_Z(\mathcal{F}) = \textrm{coker}(res^X_U:\mathcal{F}(X)\rightarrow \mathcal{F}(U))$.
\item[(2)] If $V$ has finite Krull dimension, $H^i\Gamma_Z(\mathcal{F}) = 0$ for all $i > 1$.
\end{enumerate}
\end{customthm}

%with support in an arbitrary closed set vanishes in degrees $> 1$ (Theorem \ref{vanishing theorem for local cohomology for valuation rings}). 
 
\noindent Thus, local cohomology computations on the spectrum of a valuation ring reduce to computations of sheaf cohomology on open subschemes. Theorem \ref{vanishing theorem for local cohomology for valuation rings} follows from the triviality of higher sheaf cohomology of abelian sheaves on the spectrum of any local ring (Lemma \ref{sheaf cohomology vanishes for local rings}). Finiteness of Krull dimension plays an important role in Theorem \ref{vanishing theorem for local cohomology for valuation rings}(2), because in this case an open subscheme of Spec($V$) is always affine (Lemma \ref{closed and open subsets of spectrum of a valuation ring}).  We end the paper with an example of a valuation ring of infinite Krull dimension for which the `affineness' of open subschemes fails (Proposition \ref{a valuation ring of infinite Krull dimension not satisfying the vanishing of local cohomology}). Consequently, one would expect Theorem \ref{vanishing theorem for local cohomology for valuation rings} to also fail. Indeed, local cohomology no longer vanishes in degrees $> 1$ even for the structure sheaf (Proposition \ref{a valuation ring of infinite Krull dimension not satisfying the vanishing of local cohomology}(4)).

%\begin{customthm}{\ref{sheaf cohomology vanishes for valuation rings}}
%Let $V$ be a valuation ring (not necessarily of finite Krull dimension), and $X = \textrm{Spec}(V)$. For a sheaf of abelian groups $\mathcal{F}$ on $X$, the sheaf cohomology groups $H^i(X, \mathcal{F}) = 0$, for all $i > 0$.
%\end{customthm}

\vspace{1mm}

\textbf{Acknowledgments:} I am grateful to Karen E. Smith for graciously sharing her ideas, and for her suggestions that have significantly improved the quality of this paper. I also thank her for coming up with the name `countably exhaustive ordered abelian group' (Definition \ref{countably exhaustive definition}). Bhargav Bhatt and Greg Muller's encouragement inspired this investigation, and I had numerous enlightening discussions with Bhargav at various stages. One such discussion helped in simplifying the exposition of Section \ref{Local Cohomology of Valuation Rings}. Andreas Blass and Salman Siddiqi patiently answered some set theoretic questions I encountered in Section \ref{finiteness of projective dim}. I also thank Viswambhara Makam, Takumi Murayama, Linquan Ma for helpful discussions, and Juan Felipe P\'erez for comments on a draft of this paper. {The author was partially supported by NSF Grant DMS-1501625.}

\vspace{1mm}

\section{\textbf{Preliminaries}}

All rings are assumed to be commutative, with an identity element. By a local ring we mean a commutative ring (which is not necessarily Noetherian) with a unique maximal ideal. We denote the residue field of a local ring by $\kappa$. The symbol $\mathbb N$ denotes the positive integers. The terminologies `limit' and `colimit' are preferred over `inverse/projective limit' and `direct/injective limit'. We assume the reader is familiar with basic properties of valuations and valuation rings. Both these terms are used interchangeably in the paper. A great all round reference for valuation theory is \cite[Chapter VI]{Bou89}. Valuations are sometimes defined in different ways in the literature (additive vs. multiplicative notation), so we fix the definition we use: 

\begin{definition}
\label{definition of valuation}\cite[VI.3.1, Definition 1]{Bou89}
A \textbf{valuation} $v$ on a field $K$ with \textbf{value group} $G$ (a totally ordered abelian group) is a surjective group homomorphism
$$v: K^{\times} \twoheadrightarrow G$$
such that for all $x, y \in K^{\times}$ with $x + y \neq 0$, $v(x + y) \geq \textrm{min}\{v(x), x(y)\}$. For a field extension $K/ k$, a \textbf{valuation $v$ on $K/k$} is a valuation $v$ on $K$ such that $v(k^{\times}) = \{0\}$.
\end{definition}

Given an ordered abelian group $G$, we use $G^+$ to denote the set of elements of $G$ that are strictly bigger than the identity element $0$. For $a \in G$, we use $G^{\geq a}$ to denote the set of elements of $G$ that are $\geq a$, and similarly for $G^{\leq a}$. For elements $x, y$ in a ring $R$, we use $x|y$ to denote $x$ divides $y$. 

\vspace{1mm}

To avoid confusion, we denote the $I$-torsion cohomology functors by $R^i\Gamma_I$, and the local cohomology functors with support in a closed set $Z$ by $H^i\Gamma_Z$.

\vspace{1mm}

\section{\textbf{Torsion cohomology with respect to the maximal ideal}}
\label{torsion cohomology}

Recall that given a commutative ring $A$ and an ideal $I \subset A$, we get a covariant functor
$$\Gamma_I: Mod_A \rightarrow Mod_A$$
called the \textbf{$I-torsion$} functor, where for an $A$-module $M$,
$$\Gamma_I(M) = \{m \in M: \exists n \in \mathbb{N} \hspace{1mm} \textrm{such that} \hspace{1mm} I^nm = 0\}.$$
It is easy to see that $\Gamma_I$ is left-exact, and its right-derived functors, denoted $R^i\Gamma_I$ for $i \geq 0$, will be called the \textbf{$I$-torsion cohomology} functors \footnote{This non-standard terminology is used for reasons mentioned in the Introduction.}. One can also verify that 
$$\Gamma_I(M) \cong \textrm{colim}_{t \in \mathbb{N}} \textrm{Hom}_A(A/I^t, M),$$ 
and using the fact that cohomology commutes with filtered direct limits, it follows that for any $i \geq 0$, 
$$R^i\Gamma_I(M) \cong \textrm{colim}_{t \in \mathbb{N}} \textrm{Ext}^i_A(A/I^t, M).$$

\vspace{1mm}

In this section, we will examine the functors $R^i\Gamma_I$ when $A$ is a valuation ring $V$ and $I$ is the maximal ideal $\frak m$ of $V$. The following Lemma will be useful:

\begin{lemma}
\label{value groups with least positive element}
Let $v$ be a non-trivial valuation on a field $K$ with value group $G$. Let $V$ be the corresponding valuation ring, and $\frak m$ its maximal ideal. Then the following are equivalent:
\begin{enumerate}
\item[(1)] $\frak{m}^2 \neq \frak m$.
\item[(2)] $G^{+} := \{g \in G: g > 0\}$ has a smallest element.
\item[(3)] $\frak m$ is principal.
\end{enumerate}
\end{lemma}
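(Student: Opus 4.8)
The plan is to establish the cycle of implications $(2)\Rightarrow(3)\Rightarrow(1)\Rightarrow(2)$, working throughout with the explicit descriptions $V=\{x\in K^{\times}: v(x)\ge 0\}\cup\{0\}$ and $\frak{m}=\{x\in K^{\times}: v(x)>0\}\cup\{0\}$, and using that $V$ is a domain and that $v$ is surjective onto $G$. Non-triviality of $v$ enters only to guarantee $\frak{m}\neq 0$, equivalently $G^{+}\neq\varnothing$.

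First I would prove $(2)\Rightarrow(3)$: if $\gamma$ is the least element of $G^{+}$, pick $f\in K^{\times}$ with $v(f)=\gamma$, so $f\in\frak{m}$; for any nonzero $x\in\frak{m}$ we have $v(x)>0$, hence $v(x)\ge\gamma$, so $v(x/f)\ge 0$ and $x\in fV$. Thus $\frak{m}=fV$. Next, $(3)\Rightarrow(1)$: if $\frak{m}=fV$, then $f$ is a nonzero nonunit, $\frak{m}^{2}=f^{2}V$, and $f\in f^{2}V$ would force $f$ to be a unit after cancelling $f$ in the domain $V$ --- contradiction; hence $\frak{m}^{2}\subsetneq\frak{m}$.

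For $(1)\Rightarrow(2)$ I would argue the contrapositive. Suppose $G^{+}$ has no least element. Then no $\delta\in G^{+}$ is minimal, so by totality of the order there is $\varepsilon\in G^{+}$ with $\varepsilon<\delta$, and $\delta=\varepsilon+(\delta-\varepsilon)$ exhibits $\delta$ as a sum of two elements of $G^{+}$. Given a nonzero $x\in\frak{m}$ with $v(x)=\delta$, write $\delta=\varepsilon_{1}+\varepsilon_{2}$ with each $\varepsilon_{i}>0$ and choose $y_{i}\in K^{\times}$ with $v(y_{i})=\varepsilon_{i}$; then $y_{1},y_{2}\in\frak{m}$, and $u:=x(y_{1}y_{2})^{-1}$ lies in $V$ with $v(u)=0$, so $u$ is a unit and $x=(uy_{1})y_{2}\in\frak{m}^{2}$. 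Hence $\frak{m}=\frak{m}^{2}$, which is the contrapositive of $(1)\Rightarrow(2)$.

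I do not expect a serious obstacle here: the argument is a short chain of valuation-theoretic manipulations. The one genuinely non-formal point is the observation used in $(1)\Rightarrow(2)$ --- that in a totally ordered abelian group whose positive cone $G^{+}$ has no minimum, every element of $G^{+}$ is a sum of two elements of $G^{+}$ --- and this follows immediately from totality once one unwinds the failure of minimality. Everything else is bookkeeping with $v$, the domain property of $V$, and surjectivity of $v$.
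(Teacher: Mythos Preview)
Your proof is correct and follows the same overall skeleton as the paper (a cycle of implications, with the contrapositive for $(1)\Rightarrow(2)$), but the execution differs in two places worth noting. For $(3)\Rightarrow(1)$ the paper simply invokes Nakayama's lemma, whereas you argue directly via cancellation in the domain $V$; your route is more elementary and self-contained. For the contrapositive of $(1)\Rightarrow(2)$, the paper takes a slightly longer path: given $\alpha=v(x)$ it first finds $0<\beta<\alpha$, then $0<\gamma<\min\{\beta,\alpha-\beta\}$ so that $2\gamma<\alpha$, and concludes $x\in(y^2)\subset\frak m^2$ where $v(y)=\gamma$. Your decomposition $\delta=\varepsilon+(\delta-\varepsilon)$ is cleaner and reaches the same conclusion in one step. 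Neither difference is substantive---both arguments rest on the same valuation-theoretic facts---but your version is marginally more streamlined.
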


\begin{proof}
The equivalence of (2) and (3) follows from the fact that the set of principal ideals of $V$ is linearly ordered by inclusion. Since $v$ is a non-trivial valuation, $\frak m$ is a non-zero ideal, and so (3) $\Rightarrow$ (1) follows from Nakayama's lemma. Thus, it suffices to show (1) $\Rightarrow$ (2). We prove the contrapositive of (1) $\Rightarrow$ (2). Suppose that $G^{+}$ does not have a smallest element. Let $x \in \frak m$ be a non-zero element. Let 
$$\alpha := v(x).$$
By our assumption on G, there exists $\beta \in G$ such that $0 < \beta < \alpha$. Similarly, there exists $\gamma \in G$ such that 
$$0 < \gamma < \textrm{min}\{\beta, \alpha- \beta\}.$$
Then $0 < 2\gamma < \beta + (\alpha - \beta) = \alpha$. Choose $y \in V$ such that $v(y) = \gamma$. Then $y \in \mathfrak m$, and  
$$v(y^2) = 2\gamma < \alpha = v(x).$$ 
Hence $y^2|x$, and so $x \in (y^2) \subset \frak m^2$. This proves $\frak m \subseteq \frak m^2$, from which it follows that $\frak m = \frak m^2$.
\end{proof}

\vspace{1mm}

The Lemma can be used to give a quick characterization of the modules $R^i\Gamma_\frak m(M)$, for a module $M$ over a valuation ring $(V, \frak m)$.

\vspace{1mm}

\begin{theorem}
\label{derived torsion characterization}
Let $V$ be a valuation ring with non-zero maximal ideal $\frak m$ and residue field $\kappa$. Let $M$ be a $V$-module.
\begin{enumerate}
\item[(1)] If $\frak m$ is principal, then $R^i\Gamma_\frak m(M) = 0$ for all $i \geq 2$, and $R^1\Gamma_\frak m(M)$ is  the cokernel of the natural map $M \rightarrow M_f$, where $f$ is a generator of $\frak m$.
\item[(2)] If $\frak m$ is not principal, then $R^i\Gamma_\frak m(M) \cong Ext^i_V(\kappa, M)$ for all $i \geq 0$. In particular, $\Gamma_\frak m(M) = \{x \in M: \frak{m}x = 0\}$, i.e., $\Gamma_\frak m(M)$ is the socle of $M$.
\end{enumerate}
\end{theorem}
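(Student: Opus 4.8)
The plan is to split along the dichotomy provided by Lemma \ref{value groups with least positive element}, using the description $R^i\Gamma_\frak m(M) \cong \mathrm{colim}_t \,\mathrm{Ext}^i_V(V/\frak m^t, M)$ established at the start of this section. The key observation is that in a valuation ring the powers $\frak m^t$ form a very rigid tower: either they are all equal to $\frak m$, or they stabilize in a controlled way.

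\textbf{Case (1): $\frak m$ principal.} Write $\frak m = (f)$. First I would note that $\frak m^t = (f^t)$, and that each $f^t$ is a nonzerodivisor (a valuation ring is a domain and $f \neq 0$ since $v$ is nontrivial). Hence $0 \to V \xrightarrow{f^t} V \to V/\frak m^t \to 0$ is a free resolution of $V/\frak m^t$ of length one, so $\mathrm{Ext}^i_V(V/\frak m^t, M) = 0$ for $i \geq 2$, giving $R^i\Gamma_\frak m(M) = 0$ for $i \geq 2$ after passing to the colimit. For $i = 0, 1$ I would compute $\mathrm{Ext}^\bullet_V(V/\frak m^t, M)$ from the same two-term resolution: $\mathrm{Ext}^0 = \ker(M \xrightarrow{f^t} M)$ and $\mathrm{Ext}^1 = \mathrm{coker}(M \xrightarrow{f^t} M) = M/f^tM$. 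Taking the colimit over $t$, where the transition maps $V/\frak m^{t+1} \to V/\frak m^{t}$ induce multiplication by $f$ on the cokernels, identifies $\mathrm{colim}_t M/f^t M$ with $\mathrm{coker}(M \to M_f)$, the localization at the multiplicative set $\{1, f, f^2, \dots\}$. This is the standard computation of local cohomology with respect to a principal ideal generated by a nonzerodivisor, and it goes through verbatim here.

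\textbf{Case (2): $\frak m$ not principal.} By Lemma \ref{value groups with least positive element}, $\frak m^2 = \frak m$, and hence by induction $\frak m^t = \frak m$ for all $t \geq 1$. Therefore the colimit system $\{\mathrm{Ext}^i_V(V/\frak m^t, M)\}_t$ is the constant system on $\mathrm{Ext}^i_V(V/\frak m, M) = \mathrm{Ext}^i_V(\kappa, M)$ with identity transition maps, so $R^i\Gamma_\frak m(M) \cong \mathrm{Ext}^i_V(\kappa, M)$ for all $i \geq 0$. The ``in particular'' statement follows by taking $i = 0$: $\mathrm{Ext}^0_V(\kappa, M) = \mathrm{Hom}_V(V/\frak m, M) = \{x \in M : \frak m x = 0\}$, which is exactly the socle.

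\textbf{Expected main obstacle.} There is no deep obstacle; the content has already been extracted into Lemma \ref{value groups with least positive element} (the equivalence $\frak m$ principal $\iff \frak m^2 \neq \frak m$) and into the general identity $R^i\Gamma_I(M) \cong \mathrm{colim}_t \mathrm{Ext}^i_V(V/I^t, M)$. The one point requiring a little care is the identification of the transition maps in Case (1): one must check that under the surjections $V/\frak m^{t+1} \twoheadrightarrow V/\frak m^t$ the induced maps on $\mathrm{Ext}^1 \cong M/f^tM$ are multiplication by $f$ (equivalently, that the two-term free resolutions are compatible with multiplication by $f$ in the obvious way), so that the colimit is genuinely the localization $M_f$ modulo the image of $M$ rather than something else. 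This is routine diagram-chasing with the resolutions $0 \to V \xrightarrow{f^{t}} V \to V/\frak m^t \to 0$.
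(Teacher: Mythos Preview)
Your proposal is correct and follows essentially the same route as the paper: the dichotomy via Lemma \ref{value groups with least positive element}, the two-term free resolution $0 \to V \xrightarrow{f^t} V \to V/\frak m^t \to 0$ in the principal case, and the identification of the colimit via the multiplication-by-$f$ transition maps are exactly what the paper does. The only cosmetic difference is that the paper packages the $i=0,1$ computation in Case (1) as a single four-term exact sequence and a commutative diagram, whereas you describe $\mathrm{Ext}^0$ and $\mathrm{Ext}^1$ separately; the content is identical.
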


\begin{proof}
We prove (2) first. Note that if $\frak m$ is not principal, then using Lemma \ref{value groups with least positive element}(1) and induction, one can show that for all $n \in \mathbb {N}$, $\frak m = \frak{m}^n$. Then for all $i \geq 0$, 
$$R^i\Gamma_\frak m(M) \cong \textrm{colim}_{t \in \mathbb{N}} \textrm{Ext}^i_V(V/\frak{m}^t, M) = \textrm{colim}_{t \in \mathbb{N}} \textrm{Ext}^i_V(\kappa, M) = \textrm{Ext}^i_V(\kappa, M).$$
Now suppose that $\frak m$ is principal. Let $\frak m = (f)$, for some $f \neq 0$. Note that for all $t \in \mathbb{N}$, $f^t$ is a non-zerodivisor on $V$, giving us a short exact sequence of $V$-modules
$$0 \rightarrow V \xrightarrow{f^t\cdot} V \rightarrow V/\frak{m}^t \rightarrow 0,$$
where the first map is left multiplication by $f^t$. For a $V$-module $M$, we then get a long exact sequence of Ext-modules
\begin{equation}
\label{les for m principal}
0 \rightarrow \textrm{Hom}_V(V/\frak{m}^t, M) \rightarrow \textrm{Hom}_V(V, M) \xrightarrow{f^t\cdot} \textrm{Hom}_V(V, M) \rightarrow \textrm{Ext}^1_V(V/\frak{m}^t,M) \rightarrow \dots
\end{equation}
The projectivity of $V$ gives us $\textrm{Ext}^i_V(V,M) = 0$ for all $i \geq 1$. As a result, for all $i \geq 2$, $\textrm{Ext}^i_V(V/\frak{m}^t,M) = 0$, and so
$$R^i\Gamma_\frak m(M) \cong \textrm{colim}_{t \in \mathbb{N}} \textrm{Ext}^i_V(V/\frak{m}^t, M) = 0.$$

Since $\textrm{Hom}_V(V, M) \cong M$, looking at the first few terms of (\ref{les for m principal}) we get the exact sequence
$$0 \rightarrow \textrm{Hom}_V(V/\frak{m}^t, M) \rightarrow M \xrightarrow{f^t\cdot} M \rightarrow \textrm{Ext}^1_V(V/\frak{m}^t,M) \rightarrow 0.$$
We then get a natural map of exact sequences

\[
\begin{tikzcd}
0 \arrow{r} 
  & \textrm{Hom}_V(V/\frak{m}^t, M)\arrow[rightarrow]{r}\arrow[rightarrow]{d}{}
  & M\arrow{r}{f^t\cdot}\arrow{d}{id_M} 
  & M\arrow{r}\arrow{d}{f\cdot}
  & \textrm{Ext}^1_V(V/\frak{m}^t, M) \arrow{r}\arrow{d} 
  & 0 \\
0 \arrow{r} 
  & \textrm{Hom}_V(V/\frak{m}^{t+1}, M) \arrow{r} 
  & M \arrow{r}[swap]{f^{t+1}\cdot} 
  & M \arrow{r}
  & \textrm{Ext}^1_V(V/\frak{m}^{t+1}, M) \arrow{r}
  & 0 
\end{tikzcd}
\]
where the left and right-most vertical maps are induced by the canonical map $V/\frak{m}^{t+1} \twoheadrightarrow V/\frak{m}^t$.

\vspace{1mm}

Taking the colimit of these exact sequences over $t \in \mathbb{N}$ and using the fact that the colimit of 
$$M \xrightarrow{f\cdot} M \xrightarrow{f\cdot} M \xrightarrow{f\cdot} \dots$$
is $M_f$, we get an exact sequence
$$0 \rightarrow \Gamma_\frak m(M) \hookrightarrow M \rightarrow M_f \rightarrow \textrm{colim}_{t \in \mathbb{N}} \textrm{Ext}^1_V(V/\frak{m}^t, M) \rightarrow 0,$$
where the map $M \rightarrow M_f$ is the canonical one. Then $R^1\Gamma_\frak m(M) \cong \textrm{colim}_{t \in \mathbb{N}} \textrm{Ext}^1_V(V/\frak{m}^t, M) = \textrm{coker}(M \rightarrow M_f)$, completing the proof of (1).
\end{proof}

\vspace{1mm}

We now mount an attack on understanding $R^i\Gamma_\frak m$, when $\frak m$ is not finitely generated. Somewhat surprisingly, we will see in Section \ref{finiteness of projective dim} that for all valuation rings of finite Krull dimension, $R^i\Gamma_\frak m(\_)$ vanishes for all $i \geq 3$. Here we deal with the cases $i = 1, 2$.

\vspace{1mm}

\begin{proposition}
\label{derived torsion cohomology does not equal local cohomology part 1}
If $V$ is a valuation ring with maximal ideal $\frak m$ that is not finitely generated (equivalently not principal), then 
\begin{enumerate}
\item[(1)] $R^1\Gamma_\frak m(\frak m) \neq 0$.
\item[(2)] There exists a $V$-module $M$ for which $R^2\Gamma_\frak m(M) \neq 0$.
\end{enumerate}
\end{proposition}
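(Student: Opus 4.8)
The plan is to funnel both parts through the isomorphism of Theorem \ref{derived torsion characterization}(2): since $\mathfrak{m}$ is not principal, $R^i\Gamma_{\mathfrak{m}}(M) \cong \mathrm{Ext}^i_V(\kappa, M)$ for every $V$-module $M$ and every $i \geq 0$. So (1) is the assertion $\mathrm{Ext}^1_V(\kappa, \mathfrak{m}) \neq 0$, and (2) asks for a $V$-module $M$ with $\mathrm{Ext}^2_V(\kappa, M) \neq 0$. The one ingredient I would use throughout is the short exact sequence $0 \to \mathfrak{m} \to V \to \kappa \to 0$. Applying $\mathrm{Hom}_V(-, M)$ and using that $V$ is free, so $\mathrm{Ext}^{\geq 1}_V(V, M) = 0$, yields both the dimension shift $\mathrm{Ext}^i_V(\kappa, M) \cong \mathrm{Ext}^{i-1}_V(\mathfrak{m}, M)$ for all $i \geq 2$, and the description $\mathrm{Ext}^1_V(\kappa, M) \cong \mathrm{coker}\bigl(\mathrm{Hom}_V(V, M) \to \mathrm{Hom}_V(\mathfrak{m}, M)\bigr)$.

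For (1), I would observe that the extension class of $0 \to \mathfrak{m} \to V \to \kappa \to 0$ in $\mathrm{Ext}^1_V(\kappa, \mathfrak{m})$ is nonzero: were it zero the sequence would split, realizing the nonzero module $\kappa$ as a submodule of $V$; but $\kappa$ is killed by $\mathfrak{m} \neq 0$, whereas the domain $V$ has no nonzero $\mathfrak{m}$-torsion. Equivalently, one computes directly from the description above that the restriction map $\mathfrak{m} \cong \mathrm{Hom}_V(V, \mathfrak{m}) \to \mathrm{Hom}_V(\mathfrak{m}, \mathfrak{m})$ is $x \mapsto (\text{multiplication by } x)$, and since $\mathrm{Hom}_V(\mathfrak{m}, \mathfrak{m}) \cong \{x \in K : x\mathfrak{m} \subseteq \mathfrak{m}\} = V$ (with $K = \mathrm{Frac}(V)$), its cokernel is $\kappa \neq 0$. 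Either way $R^1\Gamma_{\mathfrak{m}}(\mathfrak{m}) \cong \mathrm{Ext}^1_V(\kappa, \mathfrak{m}) \neq 0$.

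For (2), the dimension shift turns the goal into producing $M$ with $\mathrm{Ext}^1_V(\mathfrak{m}, M) \neq 0$, i.e. into showing $\mathfrak{m}$ is \emph{not} a projective $V$-module; this is the step where non-principality is essential. A projective module over the local ring $V$ is free (Kaplansky's theorem, which needs no Noetherian hypothesis), and tensoring the inclusion $\mathfrak{m} \hookrightarrow V$ with $K$ shows a free $\mathfrak{m}$ must have rank one, hence $\mathfrak{m} \cong V$ would be principal — contradicting the hypothesis. Granting that $\mathfrak{m}$ is not projective, fix a free presentation $0 \to \Omega \to F \to \mathfrak{m} \to 0$; it does not split, so $\mathrm{id}_\Omega$ is not in the image of $\mathrm{Hom}_V(F, \Omega) \to \mathrm{Hom}_V(\Omega, \Omega)$, whence $\mathrm{Ext}^1_V(\mathfrak{m}, \Omega) \neq 0$. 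Taking $M = \Omega$ then gives $R^2\Gamma_{\mathfrak{m}}(\Omega) \cong \mathrm{Ext}^2_V(\kappa, \Omega) \cong \mathrm{Ext}^1_V(\mathfrak{m}, \Omega) \neq 0$.

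The only non-formal point is the non-projectivity of $\mathfrak{m}$, so that is where I expect the real work to lie; everything else is diagram-chasing around $0 \to \mathfrak{m} \to V \to \kappa \to 0$. If one prefers to avoid invoking Kaplansky's theorem, the dual basis lemma gives a direct argument: a dual basis for a projective $\mathfrak{m}$ forces an equation $1 = \sum_i y_i x_i$ with finitely many terms, each $x_i \in \mathfrak{m}$ and $y_i \mathfrak{m} \subseteq V$, and inspecting the term $y_i x_i$ that is necessarily a unit shows $\mathfrak{m}$ would be principal.
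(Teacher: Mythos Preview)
Your proof is correct and follows essentially the same route as the paper: reduce to $\mathrm{Ext}^i_V(\kappa,-)$ via Theorem~\ref{derived torsion characterization}(2), feed the sequence $0\to\mathfrak m\to V\to\kappa\to 0$ into the long exact Ext sequence, and for (2) conclude from the non-projectivity of $\mathfrak m$ (via Kaplansky) that some $\mathrm{Ext}^1_V(\mathfrak m,M)\ne 0$. The only notable variation is in (1): the paper argues that vanishing of $\mathrm{Ext}^1_V(\kappa,\mathfrak m)$ would make $\mathfrak m$ a summand of $V$, hence projective, hence principal, whereas your first argument observes more directly that a splitting would embed the torsion module $\kappa$ into the domain $V$---this sidesteps Kaplansky for part (1) and is a pleasant simplification.
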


\begin{proof}
Let $\kappa$ be the residue field of $V$. For a $V$-module $M$, the modules $R^i\Gamma_\frak m(M)$ are just the modules $\textrm{Ext}^i_V(\kappa, M)$ by Theorem \ref{derived torsion characterization}(2).
The short exact sequence
$$0 \rightarrow \frak m \rightarrow V  \rightarrow \kappa \rightarrow 0$$
gives us a long exact sequence of Ext-modules
$$\dots \rightarrow \textrm{Ext}^i_V(\kappa, M) \rightarrow \textrm{Ext}^i_V(V, M) \rightarrow \textrm{Ext}^i_V(\frak m, M) \rightarrow \textrm{Ext}^{i+1}_V(\kappa, M) \rightarrow \dots \hspace{1mm} .$$

\vspace{1mm}

(1) For $i = 0$ and $M = \frak m$, this gives us an exact sequence
$$0 \rightarrow \textrm{Hom}_V(\kappa, \frak m) \rightarrow \textrm{Hom}_V(V, \frak m) \rightarrow \textrm{Hom}_V(\frak m,\frak m) \rightarrow \textrm{Ext}^1_V(\kappa, \frak m).$$
Assume for contradiction that $\textrm{Ext}^1_V(\kappa, \frak m) = 0$. Then the natural map $\textrm{Hom}_V(V, \frak m) \rightarrow \textrm{Hom}_V(\frak m, \frak m)$ induced by the inclusion $\frak m \hookrightarrow V$ is surjective. This means that $\frak m$ is a direct summand of $V$, and so $\frak m$ is projective. Kaplansky showed that any projective module over a local ring is free \cite{Kap58}. But $\frak m$ cannot be free because if an ideal of a ring is a free module then it has to be principal, whereas we picked our valuation ring so that its maximal ideal is not principal. This shows that $R^1\Gamma_\frak m(\frak m) = \textrm{Ext}^1_V(\kappa, \frak m) \neq 0$.

\vspace{1mm}
 
(2) It suffices to show that there exists some $V$-module $M$ for which $\textrm{Ext}^2_V(\kappa, M) \neq 0$. Using the long exact sequence of Ext modules obtained in the beginning of the proof, and the fact that $\textrm{Ext}^i_V(V, M) = 0$ for all $i \geq 1$, we get 
$$\textrm{Ext}^2_V(\kappa, M) \cong \textrm{Ext}^1_V(\frak m, M),$$
for all $V$-modules $M$.
If $\textrm{Ext}^2_V(\kappa, M) = 0$ for all $V$-modules $M$, then $\textrm{Ext}^1_V(\frak m, M) = 0$ for all $V$-modules $M$. This again implies $\frak m$ is projective \cite[Lemma 4.1.6]{Wei94}, which, as we saw while proving (1), is impossible. 
\end{proof}

\vspace{1mm}

\begin{remark}
For an ideal $I$ in a Noetherian ring, the functors $\Gamma_I$ and $\Gamma_{\sqrt{I}}$ coincide ($\sqrt{I}$ denotes the radical of $I$). However, this property no longer holds for ideals in valuation rings. Intuition suggests this is because the radical of a finitely generated ideal of a valuation ring need not be finitely generated. Here is a specific example. Let $V$ be a valuation ring of finite Krull dimension $d \geq 1$ such that the maximal ideal $\frak m$ is not finitely generated. For instance, $V$ could be a non-Noetherian valuation ring of dimension 1. Then Spec($V$) is a single chain of prime ideals 
$$(0) = P_0 \subsetneq P_1 \subsetneq P_2 \subsetneq \dots P_{d-1} \subsetneq P_d = \frak m.$$
Pick $f \in \frak m$ such that $f \notin P_{d-1}.$ The radical of the ideal $(f)$ is clearly $\frak m$, giving us a principal ideal whose radical is not finitely generated. Now consider the $V$-module $V/(f)$. Let $x$ denote the class of 1 in $V/(f)$. Then $x \in \Gamma_{(f)}\big{(}V/(f)\big{)}$ since the annihilator of $x$ in $V$ is $(f)$. Because $\Gamma_\frak m\big{(}V/(f)\big{)}$ consists of elements of $V/(f)$ that are annihilated by $\frak m$ (Theorem \ref{derived torsion characterization}(2)), we have $x \notin \Gamma_{\sqrt{(f)}}\big{(}V/(f)\big{)} = \Gamma_\frak m\big{(}V/(f)\big{)}$, proving that $\Gamma_{(f)}\big{(}V/(f)\big{)} \neq \Gamma_{\sqrt{(f)}}\big{(}V/(f)\big{)}$. This example will reappear in Proposition \ref{local and torsion cohomology don't agree} where we show that torsion and local cohomologies do not give isomorphic modules, even in degree 0. 
\end{remark}

\vspace{1mm}

\section{\textbf{Projective dimension of the residue field}}
\label{finiteness of projective dim}

With an eye toward understanding the higher $\frak m$-torsion cohomology modules of a valuation ring $V$ when $\frak m$ is not principal, we turn to computing the projective dimension of the residue field $\kappa$. The projective dimension of a $V$-module $M$ will be denoted 
$\textrm{pd}_V(M).$ The main result is:

\vspace{2mm}

\noindent \textbf{Theorem \ref{projective dimension of valuation rings if finite Krull dim}:} Let $V$ be a valuation ring of finite Krull dimension with residue field $\kappa$. Then $\textrm{pd}_V(\kappa) \leq 2$. Moreover, $\textrm{pd}_V(\kappa) = 1$ if and only if $\mathfrak m$ is principal.

\vspace{1mm}

The proof of this theorem will take some work, and is given in \ref{projective dimension of valuation rings if finite Krull dim}. It readily implies a vanishing result on $\frak m$-torsion cohomology when $\frak m$ is not finitely generated (Corollary \ref{vanishing of torsion cohomology for finite dimensional valuation rings}).

\vspace{1mm}

It turns out that the maximal ideal of any valuation ring of finite Krull dimension can be generated by countably many elements, and we will prove more generally that $\textrm{pd}_V(\kappa)$ is bounded above by 2 whenever the maximal ideal is countably generated (see Theorem \ref{some cases when the residue field has projective dimension 2}).

\vspace{1mm}

\subsection{\textbf{Countably exhaustive ordered abelian groups}}
As a first step, we translate the property of countable generation of the maximal ideal of a valuation ring into a statement about the value group. This translation is more illuminating and will help us identify valuation rings whose maximal ideals are countably generated. Thus, we introduce the following terminology:

\vspace{1mm}

\begin{Definition}
\label{countably exhaustive definition}
Let $G$ be an ordered abelian group. Let $G^{+} = \{g \in G: g > 0\}$. Then $G$ is \textbf{countably exhaustive} if there exists a sequence $(g_n)_{n \in \mathbb{N}}$ in $G^+$ satisfying
\begin{enumerate}
\item[(i)] $g_1 \geq g_2 \geq g_3 \geq \dots \hspace{1mm} .$
\item[(ii)] $G^+ = \bigcup_{n \in \mathbb{N}} G^{\geq g_n}.$
\end{enumerate}
\end{Definition}

\vspace{1mm}

\begin{Remark}
\label{a strictly decreasing sequence can be chosen}
If $G^+$ has a smallest element, then $G$ is clearly countably exhaustive. If $G^+$ does not have a least element, and $G$ is countably exhaustive, then one can find a a strictly decreasing sequence $(g_n)_{n \in \mathbb{N}}$ in $G^+$ satisfying axiom (ii) in the above definition.
\end{Remark}

\vspace{1mm}

We next show that the notion of a countably exhaustive ordered abelian group captures the notion of countable generation of the maximal ideal of a valuation ring.

\vspace{1mm}

\begin{Proposition}
Let $v$ be a valuation on a field $K$ with value group $G$. Then the maximal ideal $\frak m$ of the valuation ring $V$ is countably generated if and only if $G$ is countably exhaustive.
\end{Proposition}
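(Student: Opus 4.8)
The plan is to translate the two conditions into each other via the valuation map, using the order-reversing dictionary between ideals of $V$ and upward-closed subsets of $G^+$ (together with $\{0\}$ if $\mathfrak m$ contains no unit-valued elements, which of course it does not). Recall that for a valuation $v$ with ring $V$ and maximal ideal $\mathfrak m$, an element $x \in K^\times$ lies in $\mathfrak m$ iff $v(x) > 0$, and that $x \mid y$ in $V$ iff $v(x) \le v(y)$. Thus for $g \in G^+$, choosing any $x_g \in V$ with $v(x_g) = g$, the principal ideal $(x_g)$ consists of $0$ together with all $y$ with $v(y) \ge g$, i.e. $v\big((x_g)\setminus\{0\}\big) = G^{\ge g}$.

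First I would prove the forward direction. Suppose $\mathfrak m = (f_1, f_2, f_3, \dots)$ is countably generated. Each $f_n$ is nonzero (as $v$ is nontrivial, or if $v$ is trivial then $\mathfrak m = 0$ is generated by the empty set and $G^+ = \emptyset$, a degenerate case I would dispose of in one line). Set $g_n := \min\{v(f_1), \dots, v(f_n)\} \in G^+$; this is a decreasing sequence satisfying axiom (i). For axiom (ii): given $h \in G^+$, pick $y \in \mathfrak m$ with $v(y) = h$; write $y = \sum_{k} a_k f_k$ a finite sum with $a_k \in V$, so $v(y) \ge \min_k v(a_k f_k) \ge \min_k v(f_k) \ge g_N$ where $N$ is large enough to include all indices $k$ appearing, hence $h \in G^{\ge g_N} \subseteq \bigcup_n G^{\ge g_n}$. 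The reverse containment $\bigcup_n G^{\ge g_n} \subseteq G^+$ is immediate since each $g_n > 0$. So $G$ is countably exhaustive.

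Conversely, suppose $G$ is countably exhaustive with witnessing sequence $(g_n)$. For each $n$ choose $f_n \in V$ with $v(f_n) = g_n$; then $f_n \in \mathfrak m$. I claim $\mathfrak m = (f_1, f_2, \dots)$. Clearly the right-hand side is contained in $\mathfrak m$. For the other inclusion, let $y \in \mathfrak m$ be nonzero, so $v(y) \in G^+ = \bigcup_n G^{\ge g_n}$, whence $v(y) \ge g_n = v(f_n)$ for some $n$, giving $f_n \mid y$, i.e. $y \in (f_n) \subseteq (f_1, f_2, \dots)$. Since also $0 \in (f_1, f_2, \dots)$, we get $\mathfrak m \subseteq (f_1, f_2, \dots)$, proving equality.

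Neither direction presents a genuine obstacle; the "hard part," such as it is, is mostly bookkeeping: being careful that the value group could have or fail to have a least positive element (handled uniformly by Lemma \ref{value groups with least positive element} and Remark \ref{a strictly decreasing sequence can be chosen}), and treating the trivial-valuation edge case separately so that the statement is not vacuously mishandled. The only subtle point worth stating explicitly is that a finite $V$-linear combination $\sum a_k f_k$ has valuation $\ge \min_k v(a_k f_k) \ge \min_k v(f_k)$, which is where the ultrametric inequality in Definition \ref{definition of valuation} is used, and where the decreasing arrangement of the $g_n$ makes the "min" collapse to a single $g_N$.
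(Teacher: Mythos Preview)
Your proof is correct and follows essentially the same approach as the paper's: both directions are just the obvious translation through the valuation map between generators of $\mathfrak m$ and a witnessing sequence for $G^+$. The only cosmetic difference is in the forward direction: the paper splits off the case where $\mathfrak m$ is principal and then extracts a subsequence with strictly decreasing valuations, whereas you take running minima $g_n = \min\{v(f_1),\dots,v(f_n)\}$ to get a weakly decreasing sequence directly, which is slightly cleaner since it avoids the case split.
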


\begin{proof}
For the backward implication, suppose we have a sequence $(g_n)_{n\in \mathbb{N}}$ in $G^+$ such that $G^+ = \bigcup_n G^{\geq g_n}$. Let $a_n \in \mathfrak m$ such that $v(a_n) = g_n$. Then $\frak m = (a_1, a_2, a_3, \dots)$. For the forward implication, we may suppose $\frak m$ is not principal as otherwise $G^+$ has a smallest element and so is countably exhaustive.  Choose a countable generating set $\{x_n: n \in \mathbb{N}\}$ of $\frak m$. Define a subsequence $(x_{n_k})_{k \in \mathbb{N}}$ of this generating set inductively as follows: Let $x_{n_1} = x_1$. Given $x_{n_k}$, pick $x_{n_{k+1}}$ to be the first $x_i$ such that $i > n_k$ and $v(x_i) < v(x_{n_k})$. Since $\frak m$ is not principal, such an $x_i$ has to exist as otherwise $\frak m$ would equal the ideal $(x_{n_k})$. Clearly $(x_{n_k})_{k \in \mathbb{N}}$ is also a generating set for $\frak m$. If $g_k := v(x_{n_k})$, then $g_1 \geq g_2 \geq g_3 \geq \dots$ and $G^+ = \bigcup_{k \in \mathbb{N}} G^{\geq g_k}$. So G is countably exhaustive.
\end{proof}

We will give examples of countably exhaustive ordered abelian groups in the next subsection (Proposition \ref{ordered subgroups of R^n are exhaustive}). We end this one by proving that one can bound the projective dimension of the residue field of any valuation ring whose value group is countably exhaustive: 

\vspace{1mm}

\begin{Theorem}
\label{some cases when the residue field has projective dimension 2}
Let $v$ be a valuation on a field $K$ with value group $G$. Let $V$ be the corresponding valuation ring with maximal ideal $\frak m$, and residue field $\kappa$. 
\begin{enumerate}
\item[(1)] $\textrm{pd}_V(\kappa) = 1$ if and only if $G^+$ has a smallest element.
\item[(2)] If $G$ is countably exhaustive, then $\textrm{pd}_V(\kappa) \leq 2$.
\end{enumerate}
\end{Theorem}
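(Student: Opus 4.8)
The plan is to prove both statements by constructing explicit free resolutions of $\kappa$ over $V$. For part (1), I would first note that $\textrm{pd}_V(\kappa) = 0$ is impossible since $\kappa = V/\mathfrak m$ is not free (as $\mathfrak m \neq 0$), so $\textrm{pd}_V(\kappa) = 1$ is equivalent to $\mathfrak m$ being projective, hence free (by Kaplansky's theorem, already invoked in Proposition \ref{derived torsion cohomology does not equal local cohomology part 1}), hence principal (a free ideal must be principal). Conversely, if $\mathfrak m$ is principal, say $\mathfrak m = (f)$ with $f$ a nonzerodivisor, then $0 \to V \xrightarrow{f\cdot} V \to \kappa \to 0$ is a length-one free resolution, so $\textrm{pd}_V(\kappa) \leq 1$, and equality holds since $\kappa$ is not free. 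By Lemma \ref{value groups with least positive element}, $\mathfrak m$ principal is equivalent to $G^+$ having a smallest element, giving the stated characterization.

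For part (2), the heart of the matter: assume $G$ is countably exhaustive and $\mathfrak m$ is not principal (the principal case being covered by (1)). By Remark \ref{a strictly decreasing sequence can be chosen} I may pick a strictly decreasing sequence $(g_n)_{n \in \mathbb N}$ in $G^+$ with $G^+ = \bigcup_n G^{\geq g_n}$. Choosing $a_n \in \mathfrak m$ with $v(a_n) = g_n$, we get $\mathfrak m = (a_1, a_2, a_3, \dots)$, and since $v(a_{n+1}) < v(a_n)$ we have $a_{n+1} \mid a_n$ in $V$; write $a_n = c_n a_{n+1}$ where $c_n \in \mathfrak m$ (as $v(c_n) = g_n - g_{n+1} > 0$). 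The plan is to show the free presentation $\bigoplus_{n} V e_n \xrightarrow{\varphi} V \to \kappa \to 0$, with $\varphi(e_n) = a_n$, has a kernel $K$ that is itself free, so that $\textrm{pd}_V(\kappa) \leq 2$. The natural guess for a basis of $K$ is the elements $w_n := e_n - c_n e_{n+1}$, which clearly lie in $K$ since $\varphi(w_n) = a_n - c_n a_{n+1} = 0$.

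The main obstacle — and the step deserving the most care — is proving that $\{w_n\}_{n \in \mathbb N}$ is in fact a basis of $K$: i.e., that these elements are $V$-linearly independent and that they span $K$. For linear independence, suppose $\sum_{\text{finite}} r_n w_n = 0$; comparing coefficients of the $e_n$ (which are free generators of $\bigoplus_n Ve_n$) gives a triangular system forcing all $r_n = 0$, working from the largest index downward. For spanning, take $\xi = \sum_{n \leq N} r_n e_n \in K$, so $\sum_{n \leq N} r_n a_n = 0$ in $V$; I would subtract off multiples of the $w_n$ to successively eliminate the top coefficients — e.g. $\xi - r_N w_{N-1}$ (using $a_{N-1} = c_{N-1} a_N$ appropriately, adjusted so indices match) pushes the "weight" down — reducing to a relation $r a_1 = 0$ in $V$ with $r \in V$; since $a_1 \neq 0$ and $V$ is a domain, $r = 0$. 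This elimination argument is where one must be genuinely careful about which coefficient absorbs which $c_n$ and in checking termination. Once $K$ is shown free, the resolution $0 \to K \to \bigoplus_n V e_n \to V \to \kappa \to 0$ gives $\textrm{pd}_V(\kappa) \leq 2$, completing (2). (It is worth remarking that $\textrm{pd}_V(\kappa)$ cannot be $\leq 1$ here, since by (1) that would force $G^+$ to have a least element, contrary to our standing assumption in this case — so when $G$ is countably exhaustive without a least positive element, $\textrm{pd}_V(\kappa)$ is exactly $2$.)
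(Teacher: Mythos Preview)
Your approach is essentially the paper's: part (1) is argued identically (Kaplansky plus Lemma~\ref{value groups with least positive element}), and for part (2) you build the same length-two free resolution with kernel freely generated by the elements $w_n = e_n - c_n e_{n+1}$.

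The one place where your sketch wobbles is the spanning step. The move ``$\xi - r_N w_{N-1}$'' does not eliminate any coefficient, and eliminating top-down toward a relation $r a_1 = 0$ would require knowing $c_{N-1} \mid r_N$ at each stage (true, but it needs the relation itself to see). The painless direction is bottom-up: subtract $r_1 w_1$ to kill the $e_1$-coefficient (no divisibility needed), iterate, and end with $r e_N \in K$, whence $r a_N = 0$ and $r = 0$. The paper takes yet a third variant: it divides the relation $\sum_{i \le N} r_i a_i = 0$ through by the nonzerodivisor $a_N$ in one step to solve for $r_N$, rewrites $\xi$ as $\sum_{i<N} r_i\big(e_i - (a_i/a_N)\, e_N\big)$, and then uses a short induction showing each $e_i - (a_i/a_N)\, e_N$ already lies in the span of the $w_n$.
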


\begin{proof}
If $G$ is the trivial group, then $V$ is the field $K$, and $\kappa = V$. Hence, $\textrm{pd}_V(\kappa) = 0$. Suppose $G$ is non-trivial. Then $V$ is not a field, and in particular $\textrm{pd}_V(\kappa) \geq 1$ ($\kappa$ cannot be projective because $\kappa$ is not free).
From the exact sequence 
$$\frak m \rightarrow V \rightarrow \kappa \rightarrow 0,$$ 
we get that $\textrm{pd}_V(\kappa) = 1$ if and only if $\frak m$ is projective, and the latter happens if and only if $\frak m$ is free (again using Kaplansky's characterization of projectives over local rings), hence principal since $\frak m$ is an ideal of $V$.  But principality of $\frak m$ is equivalent to $G^+$ having a smallest element by Lemma \ref{value groups with least positive element}.  This proves (1).

\vspace{1mm}

Now assume $G^+$ does not have a smallest element. By (1), $\textrm{pd}_V(\kappa) > 1$. By Remark \ref{a strictly decreasing sequence can be chosen} we have a sequence $(a_n)_{n \in \mathbb{N}}$ of elements of $G^+$ such that
$$a_1 > a_2 > a_3 > a_4 > \dots,$$
and 
$$G^+ = \bigcup_n G^{\geq a_n}.$$
Let $x_n \in \frak m$ such that $v(x_n) = a_n$. Our choice of $(a_n)_{n \in \mathbb{N}}$ shows that 
$$\frak m = (x_1, x_2, x_3, \dots),$$
and $v(x_1) > v(x_2) > v(x_3) > \dots \hspace{1mm}.$ 
Pick the obvious surjection
$$\bigoplus_{i \in \mathbb{N}} V \twoheadrightarrow \frak m.$$
If $f_i$ denotes the $i^{th}$ standard basis vector of $\bigoplus_{i \in \mathbb{N}} V$, then the above surjection maps $f_i \mapsto x_i$. We will show that the kernel of $\bigoplus_{n \in \mathbb{N}} V \twoheadrightarrow \frak m$ is generated by the set 
$$\mathcal{S} := \Bigg{\{}f_i - \frac{x_i}{x_{i+1}}f_{i+1} : i \in \mathbb{N}\Bigg{\}}.$$
Clearly $\mathcal{S}$ is linearly independent over $V$, and $\mathcal{S} \subseteq \textrm{ker}(\bigoplus_{n \in \mathbb{N}} V \twoheadrightarrow m)$. Hence the submodule, $\langle \mathcal{S} \rangle$, generated by $\mathcal{S}$ is contained in the kernel. Observe that for all $i, n \in \mathbb{N}$, the element
\begin{equation}
\label{star}
f_i - \frac{x_i}{x_{i +n}}f_{i+n}
\end{equation}
is an element of $\langle \mathcal{S} \rangle$. This is easily seen by induction on $n$. As an illustration, for $n=2$, 
$$f_i - \frac{x_i}{x_{i+2}} f_{i+2} =  \Bigg{(} f_i - \frac{x_i}{x_{i+1}} f_{i+1} \Bigg{)} + \frac{x_{i}}{x_{i+1}}\Bigg{(} f_{i+1} -  \frac{x_{i+1}}{x_{i+2}} f_{i+2} \Bigg{)} \in \langle \mathcal{S} \rangle.$$
Now suppose $a_1f_1 + a_2f_2 + \dots + a_nf_n$ is some element in $\textrm{ker}(\bigoplus_{n \in \mathbb{N}} V \twoheadrightarrow \frak m)$, where $a_i \in V$. This means that $a_1x_1 + a_2x_2 + \dots + a_nx_n = 0$. Then 
$$x_n\Bigg{(}a_1\frac{x_1}{x_n} + a_2\frac{x_2}{x_n} + \dots + a_{n-1}\frac{x_{n-1}}{x_n} + a_n\Bigg{)} = 0.$$
Since $\frak m$ is torsion-free, solving for $a_n$ we get
$$a_n = -a_1\frac{x_1}{x_n} - a_2\frac{x_2}{x_n} - \dots - a_{n-1}\frac{x_{n-1}}{x_n},$$
and so, 
$$a_1f_1 + a_2f_2 + \dots + a_nf_n = a_1\Bigg{(}f_1 - \frac{x_1}{x_n}f_n\Bigg{)} + a_2\Bigg{(}f_2 - \frac{x_2}{x_n}f_n\Bigg{)} + \dots + a_{n-1}\Bigg{(}f_{n-1} - \frac{x_{n-1}}{x_n}f_n\Bigg{)}.$$
However, by (\ref{star}), 
$$f_1 - \frac{x_1}{x_n}f_n, \hspace{1mm} f_2 - \frac{x_2}{x_n}f_n, \hspace{1mm} \dots, \hspace{1mm} f_{n-1} - \frac{x_{n-1}}{x_n}f_n \in \langle \mathcal{S} \rangle,$$
and so, $a_1f_1 + a_2f_2 + \dots + a_nf_n \in \langle \mathcal{S} \rangle$, showing that
$$\langle \mathcal{S} \rangle = \textrm{ker}\big{(}\bigoplus_{n \in \mathbb{N}} V \twoheadrightarrow \frak m\big{)}.$$
Therefore, $\textrm{ker}(\bigoplus_{n \in \mathbb{N}} V \twoheadrightarrow \frak m)$ is a free $V$-module, and $\kappa$ has a projective resolution
$$0 \rightarrow \textrm{ker}(\bigoplus_{n \in \mathbb{N}} V \twoheadrightarrow \frak m) \rightarrow \bigoplus_{n \in \mathbb{N}} V \rightarrow V \rightarrow 0,$$
proving that its projective dimension is 2.
\end{proof}

\vspace{1mm}

\begin{Remark}
\label{remark on Osofsky}
The projective dimension of ideals of a valuation ring was the subject of investigation of a paper by B. Osofsky \cite{Os67} in which the following result was established:

\vspace{1mm}

\noindent \emph{Let $V$ be a valuation ring. Let $I$ be an ideal of $V$. Then $\textrm{pd}_V(I) = n + 1$ if and only if $I$ can be generated by set of cardinality $\aleph_n$, but not by a set of smaller cardinality for all $n \geq -1$.}

\vspace{1mm}
\noindent Theorem \ref{some cases when the residue field has projective dimension 2} is a special case of Osofsky's result when the maximal ideal $\frak m$ is generated by set of cardinality at most $\aleph_0$. Osofsky's proof requires set theoretic considerations that were avoided in our proof of the $\aleph_0$ case. We will soon see that this case is already very rich, and includes all valuation rings of finite Krull dimension (Proposition \ref{finite dim valuation rings have countably exhaustive value groups}).
\end{Remark}

\vspace{1mm}

\subsection{\textbf{Examples of countably exhaustive groups}}
\label{Examples of countably exhaustive groups}

\begin{Proposition}
\label{ordered subgroups of R^n are exhaustive}
For $n \in \mathbb{N}$, consider $\mathbb{R}^{\oplus n}$ with lexicographical ordering. If $G$ is an ordered subgroup of $\mathbb{R}^{\oplus n}$, then $G$ is countably exhaustive.
\end{Proposition}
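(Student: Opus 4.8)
The plan is to induct on $n$. For the base case $n = 1$, an ordered subgroup $G \subseteq \mathbb{R}$ is Archimedean, so either $G^+$ has a smallest element (and then $G$ is trivially countably exhaustive) or $G$ is order-dense in $\mathbb{R}$; in the latter case pick any strictly decreasing sequence $(g_n)$ in $G^+$ converging to $0$ — such a sequence exists by density — and observe that $G^+ = \bigcup_n G^{\geq g_n}$, because for any $g \in G^+$ the real number $g$ is positive, hence $g_n < g$ for $n$ large. So $G$ is countably exhaustive.

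For the inductive step, suppose the result holds for $n-1$ and let $G \subseteq \mathbb{R}^{\oplus n}$ with lexicographical order. Let $\pi \colon \mathbb{R}^{\oplus n} \to \mathbb{R}$ be projection onto the first coordinate, and set $H := G \cap (\{0\} \oplus \mathbb{R}^{\oplus(n-1)})$, an ordered subgroup of $\mathbb{R}^{\oplus(n-1)}$, so $H$ is countably exhaustive by the inductive hypothesis. The image $\pi(G) \subseteq \mathbb{R}$ is also an ordered subgroup, hence countably exhaustive by the base case. Now I split into cases according to the structure of $\pi(G)^+$. If $\pi(G)^+$ has a smallest element $c$, choose $g \in G$ with $\pi(g) = c$; then every element of $G^+$ is either of the form (something in $H^+$, first coordinate $0$) or has first coordinate $\geq c$, and in the latter situation it is $\geq$ some fixed translate. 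More precisely: a cofinal-from-below sequence for $G^+$ is obtained by combining a strictly decreasing (or smallest-element) witnessing sequence for $H^+$ with the single element $g$; concretely the sequence $g_k$ defined by interleaving $g$ with the witnesses $h_k$ of $H$ works, since any $x \in G^+$ with $\pi(x) > 0$ satisfies $x \geq g$ eventually... wait, that needs $\pi(x) \geq c$, which does hold, but then $x \geq g$ in lex order only if additionally the tail comparison goes the right way — so instead I should note that $x - g$ or a suitable multiple lands in a controlled region. The cleaner formulation: the elements of $G^+$ with first coordinate $0$ form exactly $H^+$, which is exhausted by $\{H^{\geq h_k}\}$; the elements with first coordinate $> 0$ are all $\geq$ the single element $g' := (c, N)$ for $N$ a sufficiently negative element of $H$ — but $H$ need not have such an $N$. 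This is the delicate point.

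The resolution of that delicate point — and, I expect, the main obstacle — is handling the interaction between the two blocks when $\pi(G)^+$ has no smallest element. In that case pick a strictly decreasing sequence $c_1 > c_2 > \cdots$ in $\pi(G)^+$ with $\pi(G)^+ = \bigcup_k \pi(G)^{\geq c_k}$, lift each $c_k$ to some $g_k \in G$ with $\pi(g_k) = c_k$, and combine the $g_k$ with a witnessing sequence $(h_j)$ for $H^+$ into a single sequence. The claim is $G^+ = \bigcup G^{\geq g_k} \cup \bigcup G^{\geq h_j}$: given $x \in G^+$, if $\pi(x) = 0$ then $x \in H^+$ and we are done; if $\pi(x) > 0$, then $\pi(x) \geq c_k$ for some $k$, and since $\pi(x) > c_{k+1}$ strictly, lexicographic order gives $x > g_{k+1}$ regardless of the lower coordinates — because strict inequality in the first lex-coordinate dominates everything below. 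This last observation is exactly what makes the lexicographic setup cooperative: a strictly decreasing witnessing sequence for the first coordinate automatically yields a witnessing family for $G^+$, with the sole exception of elements living entirely in the lower block, which are caught by the inductive hypothesis applied to $H$. Finally, one assembles the countably many $g_k$ and $h_j$ into a single sequence indexed by $\mathbb{N}$ and passes to a decreasing rearrangement (taking pairwise minima of initial segments) to meet axiom (i) of Definition \ref{countably exhaustive definition}; axiom (ii) is preserved since shrinking the $g_n$ only enlarges the sets $G^{\geq g_n}$. I would write up the $\pi(G)^+$-has-a-smallest-element case as a minor variant (replace the sequence $(c_k)$ by the constant $c$, and note elements with positive first coordinate are all $> g$ where $\pi(g) = c$, again by strictness in the top coordinate after possibly passing to $2g, 3g, \dots$ — or just observe $c + $ anything in the lower block is $\geq$ a fixed element since one can also use a witnessing sequence for that block).
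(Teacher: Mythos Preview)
Your inductive approach is genuinely different from the paper's. The paper does not induct on $n$; it runs a coordinate-by-coordinate descent using the least-upper-bound property of $\mathbb{R}$: set $\alpha_1 = \inf \pi_1(G^+)$; if this infimum is not attained, a sequence in $G^+$ with first coordinates decreasing to $\alpha_1$ already witnesses countable exhaustivity. If it is attained, pick $\omega_1$ with $\pi_1(\omega_1) = \alpha_1$, restrict to $\Lambda_1 = G^+ \cap G^{\leq \omega_1}$, and repeat with $\pi_2$. After at most $n$ steps one either finds a coordinate where the infimum is missed (yielding the sequence) or exhibits $(\alpha_1,\dots,\alpha_n)$ as the least element of $G^+$. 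No convex subgroup $H$ and no case split on $\pi(G)$ ever enter.

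Your argument is on the right track, and the case where $\pi(G)^+$ has no least element is handled correctly. But the case where $\pi(G)^+$ has a least element $c$ is left unresolved: you yourself observe that $x \in G^+$ with $\pi(x)=c$ need not satisfy $x \geq g$, and neither ``passing to $2g,3g,\dots$'' nor the vague final parenthetical fixes this. The clean observation you are circling is the following: \emph{whenever $H \neq 0$, any witnessing sequence $(h_j)$ for $H^+$ is already a witnessing sequence for $G^+$}, because every $x \in G^+$ with $\pi(x)>0$ dominates every $h_j$ in lex order (the $h_j$ have first coordinate $0$). So the lifts $g_k$ are entirely redundant once $H$ is nontrivial, and both of your cases collapse. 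The only remaining situation is $H=0$, where $\pi$ is an order-embedding of $G$ into $\mathbb{R}$ and the base case applies directly. Stating this one observation turns the inductive step into two lines; as written, the proposal obscures it and leaves the smallest-element case incomplete.
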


\begin{proof}
For $i = 1, \dots, n$, we let $\pi_i: \mathbb{R}^{\oplus n} \rightarrow \mathbb{R}$ denote projection onto the $i^{th}$-coordinate. {The proof follows a recursive procedure, and uses the greatest lower bound property of the real numbers. In particular, we use the convention that if a subset of $\mathbb R$ is not bounded below, then its infimum is $-\infty$.}

\vspace{1mm}

{Let $\alpha_1$ be the greatest lower bound of $\pi_1(G^+)$. We note that $\alpha_1 \geq 0$. If $\alpha_1 \notin \pi_1(G^+)$, choose a sequence $(s_n)_n \subset G^+$ such that $\pi_1(s_1) \geq \pi_1(s_2) \geq \pi_1(s_3) \geq \dots$, and
$$\lim_{n \rightarrow \infty} \pi_1(s_n) = \alpha_1.$$
Then $s_1 \geq s_2 \geq s_3 \geq \dots$ (by definition of lexicographical order), and $G^+ = \bigcup_{n \in \mathbb N} G^{\geq s_n}$, proving countable exhaustivity.}

\vspace{1mm}

If $\alpha_1 \in \pi_1(G^+)$, choose $\omega_1 \in G^+$ such that $\pi_1(\omega_1) = \alpha_1$, and let $\alpha_2$ be the greatest lower bound of {$\pi_2(\Lambda_1)$, where 
$$\Lambda_1 := G^+ \cap G^{\leq \omega_1}.$$} 
If $\alpha_2 \notin \pi_2(\Lambda_1)$, then repeat the procedure in the previous paragraph, for $\Lambda_1$ instead of $G^+$, to get countable exhaustivity of $G$. {In other words, pick $t_1, t_2, t_3, \dots \in \Lambda_1$ such that $\pi_2(t_1) \geq \pi_2(t_2) \geq \pi_2(t_3) \geq \dots$, and 
$$\lim_{n \rightarrow \infty} \pi_2(t_n) = \alpha_2.$$ 
Note $\pi_1(t_n) = \alpha_1$, for all $n$, by definition of $\alpha_1$, and since $0 < t_n \leq \omega_1$ by choice. Thus, $t_1 \geq t_2 \geq t_3 \geq \dots$, and $G^{+} = \bigcup_{n} G^{\geq t_n}$.} 

If $\alpha_2 \in \pi_2(\Lambda_1)$, choose $\omega_2 \in \Lambda_1$ such that 
$$\pi_2(\omega_2) = \alpha _2.$$
Then $\omega_2$ also satisfies $\pi_1(\omega_2) = \alpha_1$, for the same reason as the elements $t_n$ do. Continuing as above, define $\Lambda_2 := G^+ \cap G^{\leq \omega_2}$, and $\alpha_3 := \inf \pi_3(\Lambda_2)$. Depending on whether $\alpha_3 \in \pi_3(\Lambda_2)$, we repeat the above argument.

\vspace{1mm}

This process terminates after at most $n$ steps, and one of two possibilities occur --
 
\noindent (1) {There exists a smallest $j \in \{1, \dots, n\}$ such that the infimum $\alpha_j$ of $\pi_j(\Lambda_{j-1})$ is not an element of the set.} Then repeating the argument in the second paragraph of this proof for $\Lambda_{j-1}$ instead of $G^+$, one gets countable exhaustivity of $G$. 

\noindent (2) For all $j \in \{1, \dots, n\}$, $\alpha_j \in \pi_j(\Lambda_{j-1})$, allowing us to pick $\omega \in G^+$ such that
$$\omega = (\alpha_1, \alpha_2, \dots, \alpha_n) .$$ 
But $\omega$ is then the smallest element of $G^+$, and so $G$ is trivially countably exhaustive.
\end{proof}

\vspace{1mm}

A consequence of Proposition \ref{ordered subgroups of R^n are exhaustive} is that the maximal ideal of a valuation ring of finite Krull dimension is countably generated.

\vspace{1mm}

\begin{Proposition}
\label{finite dim valuation rings have countably exhaustive value groups} 
Let $v$ be a valuation on a field $K$ such that the corresponding valuation ring $V$ has finite Krull dimension $d$. Then the value group $G$ of $v$ is order-isomorphic to a subgroup of $\mathbb{R}^{\oplus d}$, with induced lexicographical ordering. In particular, $G$ is countably exhaustive.
\end{Proposition}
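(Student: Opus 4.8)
The plan is to realize the value group $G$ of a finite-dimensional valuation ring $V$ as an ordered subgroup of $\mathbb{R}^{\oplus d}$ by analyzing the convex subgroups of $G$, and then invoke Proposition~\ref{ordered subgroups of R^n are exhaustive} to conclude countable exhaustivity. The starting observation is the standard order-theoretic dictionary for valuations: the prime ideals of $V$ are in inclusion-reversing bijection with the convex (isolated) subgroups of $G$, so the hypothesis that $V$ has Krull dimension $d$ says precisely that $G$ has a finite chain of convex subgroups
$$0 = H_d \subsetneq H_{d-1} \subsetneq \dots \subsetneq H_1 \subsetneq H_0 = G,$$
and no longer one. First I would recall (or cite \cite[Chapter VI]{Bou89}) this correspondence and fix such a chain.

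Next, each successive quotient $H_{i}/H_{i+1}$ is an ordered abelian group with \emph{no} proper nontrivial convex subgroups, i.e.\ it is archimedean; by H\"older's theorem every archimedean ordered abelian group embeds, as an ordered group, into $(\mathbb{R}, +)$ with its usual ordering. So for each $i \in \{0, 1, \dots, d-1\}$ I get an order-embedding $H_i/H_{i+1} \hookrightarrow \mathbb{R}$. The remaining task is to assemble these $d$ embeddings into a single order-embedding $G \hookrightarrow \mathbb{R}^{\oplus d}$ with the lexicographic order. I would do this by choosing, for each $i$, a group-theoretic splitting of the filtration — concretely, pick a section (as abelian groups, using that subgroups of $\mathbb{Q}$-vector spaces... actually one cannot always split over $\mathbb{Z}$, so I would instead argue more carefully, see below) — or, more robustly, build the map level by level: send $g \in G$ to $(\varphi_1(\bar g_1), \varphi_2(\bar g_2), \dots, \varphi_d(\bar g_d))$ where $\bar g_i$ is the image of an appropriate ``$H_{i}$-component'' of $g$ under the H\"older embedding $\varphi_i$ of $H_{i-1}/H_i$. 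One checks that lexicographic comparison of the tuple reproduces the ordering on $G$ precisely because $g > 0$ in $G$ is detected by the sign of its image in the largest quotient $G/H_1$ in which it is nonzero, which is exactly the first nonzero coordinate.

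The cleanest way to make the assembly rigorous, avoiding splitting issues, is by induction on $d$: the convex subgroup $H_1$ has a chain of convex subgroups of length $d-1$, so by induction $H_1 \hookrightarrow \mathbb{R}^{\oplus(d-1)}$ order-theoretically; meanwhile $G/H_1$ is archimedean, hence embeds in $\mathbb{R}$; one then needs that an extension of ordered groups $0 \to H_1 \to G \to G/H_1 \to 0$ with $H_1 \hookrightarrow \mathbb{R}^{\oplus(d-1)}$ and $G/H_1 \hookrightarrow \mathbb{R}$ yields $G \hookrightarrow \mathbb{R} \oplus \mathbb{R}^{\oplus(d-1)} = \mathbb{R}^{\oplus d}$ lexicographically. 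This last extension step is the main obstacle: one cannot in general split the short exact sequence over $\mathbb{Z}$, but one can after tensoring with $\mathbb{Q}$ (every short exact sequence of $\mathbb{Q}$-vector spaces splits), and the ordering on $G$ extends uniquely and compatibly to $G \otimes_{\mathbb{Z}} \mathbb{Q}$ (divisible hull) — so I would pass to divisible hulls, where $H_1 \otimes \mathbb{Q} \hookrightarrow \mathbb{R}^{\oplus(d-1)}$ and $(G/H_1)\otimes \mathbb{Q} \hookrightarrow \mathbb{R}$ still hold, split, and restrict back to $G$. Once $G$ is identified with an ordered subgroup of $\mathbb{R}^{\oplus d}$, Proposition~\ref{ordered subgroups of R^n are exhaustive} applies verbatim to give that $G$ is countably exhaustive, which is the assertion. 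I expect the H\"older embedding input and the divisible-hull bookkeeping in the inductive step to be where essentially all the content lies; everything else is dictionary-chasing.
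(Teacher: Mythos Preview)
Your proposal is correct and follows essentially the same route as the paper: identify the convex (isolated) subgroups of $G$ with the primes of $V$ to see that $G$ has rank $d$, embed $G$ order-isomorphically into $\mathbb{R}^{\oplus d}$ with the lexicographic order, and then invoke Proposition~\ref{ordered subgroups of R^n are exhaustive}. The only difference is that where the paper simply cites \cite[Chapter 2, Proposition 2.10]{Abh59} for the embedding $G \hookrightarrow \mathbb{R}^{\oplus d}$, you sketch its proof via H\"older's theorem and a divisible-hull/induction argument; that sketch is sound (the convexity of $H_1 \otimes \mathbb{Q}$ in $G \otimes \mathbb{Q}$ ensures the vector-space splitting is order-lexicographic), so your version is a self-contained expansion of the cited step rather than a different strategy.
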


\begin{proof}
One has the notion of an \emph{isolated subgroup} of an ordered abelian group in valuation theory \cite[VI.4.2, Definition 1]{Bou89}, and there is a well-known inclusion reversing bijection
$$\{\textrm{prime ideal of }V\} \longleftrightarrow \{\textrm{isolated subgroup of }G\}.$$ 
For details we refer the reader to \cite[VI $\mathsection 4.1$ and VI $\mathsection 4.3$]{Bou89}. Under this bijection, the maximal ideal $\frak m$ corresponds to the trivial subgroup, and the zero ideal corresponds to $G$. Thus, the number of non-trivial isolated subgroups of $G$, denoted $\rho(G)$, equals the number of non-maximal prime ideals of $V$. Since a valuation ring of dimension $d$ has $d$ non-maximal primes ideals,  
$$\rho(G) = d.$$
Applying \cite[Chapter 2, Proposition 2.10]{Abh59} we get that $G$ is order isomorphic to a subgroup of $\mathbb{R}^{\oplus d}$, and so also countably exhaustive by Proposition \ref{ordered subgroups of R^n are exhaustive}.
\end{proof}

\vspace{1mm}

{
\begin{Remark}
Isolated subgroups are also called convex subgroups in the literature. The number $\rho(G)$ is called the \textbf{height/rank} of the ordered abelian group $G$. Following \cite{Abh59}, we have defined $\rho(G)$ (at least for valuation rings of finite Krull dimension) to be the number of non-trivial convex subgroups of $G$. However, other sources such as \cite[VI $\mathsection 4.5$, Definition 2]{Bou89} define $\rho(G)$ to be the number of proper convex subgroups of $G$. 
\end{Remark}
}

\vspace{1mm}

As a corollary, we obtain that most valuations arising in algebraic geometry have value groups that are countably exhaustive.

\vspace{1mm}

\begin{Corollary}
\label{valuations on function fields have countably exhaustive value groups}
Fix a ground field $k$. Let $K$ be a finitely generated field extension of $k$ (such as the function field of a variety over $k$). If $v$ is a valuation on $K/k$ with value group $G$, then $G$ is countably exhaustive.
\end{Corollary}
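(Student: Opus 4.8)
The plan is to deduce this from Proposition~\ref{finite dim valuation rings have countably exhaustive value groups} by verifying that its hypothesis is met, i.e., that the valuation ring $V$ of $v$ has finite Krull dimension. The key input is Abhyankar's inequality: for any valuation $v$ on $K/k$ with valuation ring $V$, value group $G$, and residue field $\kappa_v$, one has
$$\textrm{rank}(G) + \textrm{tr.deg}_k(\kappa_v) \leq \textrm{tr.deg}_k(K),$$
where $\textrm{rank}(G)$ denotes the number of proper isolated (convex) subgroups of $G$; see \cite[Chapter 2]{Abh59}. (One in fact has the sharper inequality with the rational rank of $G$, which dominates $\textrm{rank}(G)$, but the stated form suffices.)

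First I would recall, exactly as in the proof of Proposition~\ref{finite dim valuation rings have countably exhaustive value groups}, that the inclusion-reversing bijection between prime ideals of $V$ and isolated subgroups of $G$ sends the zero ideal to $G$ and the maximal ideal to the trivial subgroup; hence a chain of primes of $V$ of length $d$ corresponds to a chain of isolated subgroups, and consequently $\dim V = \textrm{rank}(G)$ whenever either quantity is finite. Next, since $K$ is a finitely generated extension of $k$, $\textrm{tr.deg}_k(K)$ is finite; discarding the non-negative term $\textrm{tr.deg}_k(\kappa_v)$ from Abhyankar's inequality then gives
$$\dim V = \textrm{rank}(G) \leq \textrm{tr.deg}_k(K) < \infty.$$
Thus $V$ is a valuation ring of finite Krull dimension, and Proposition~\ref{finite dim valuation rings have countably exhaustive value groups} immediately yields that $G$ is countably exhaustive.

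There is really no hard step here: the corollary is a bookkeeping consequence of Abhyankar's inequality together with the structural results of this section. The only point needing a little care is to invoke Abhyankar's inequality in the appropriate generality — it holds for an \emph{arbitrary} valuation on a finitely generated extension, with no Noetherian, discreteness, or divisibility assumptions on $G$ and no assumption that $\kappa_v$ be algebraic over $k$ — and to match the conventions on $\textrm{rank}(G)$ with those used in Proposition~\ref{finite dim valuation rings have countably exhaustive value groups} (as noted there, the two standard conventions agree for finite-dimensional valuation rings).
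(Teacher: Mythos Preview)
Your proposal is correct and follows essentially the same approach as the paper: both invoke Abhyankar's inequality to bound the Krull dimension of $V$ by $\textrm{tr.deg}_k(K) < \infty$, and then apply Proposition~\ref{finite dim valuation rings have countably exhaustive value groups}. The only cosmetic difference is that the paper states Abhyankar's inequality directly in terms of $d = \dim V$ (citing \cite[Corollary~1]{Abh56}), whereas you phrase it in terms of $\textrm{rank}(G)$ and then identify this with $\dim V$ via the prime--isolated subgroup bijection; the content is identical.
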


\begin{proof}
Let $d$ be the dimension of the corresponding valuation ring (at this point $d$ could be infinite), and $\kappa$ the residue field. We have the following fundamental inequality due to Abhyankar \cite[Corollary 1]{Abh56}: 
$$d + \textrm{tr.deg} \hspace{1mm} \kappa/k \leq \textrm{tr.deg} \hspace{1mm} K/k.$$
Then $d$ is finite since $\textrm{tr.deg} \hspace{1mm} K/k$ is finite, and so we are done by Proposition \ref{finite dim valuation rings have countably exhaustive value groups}.
\end{proof}

\vspace{1mm}

The proof the theorem stated at the very beginning of this section is now a matter of putting together all the results we have obtained so far:

\vspace{1mm}

\begin{Theorem}
\label{projective dimension of valuation rings if finite Krull dim}
Let $V$ be a valuation ring of finite Krull dimension with residue field $\kappa$, and assume $V$ is not a field. Then $\textrm{pd}_V(\kappa) \leq 2$. Moreover, $\textrm{pd}_V(\kappa) = 1$ if and only if $\frak m$ is principal.
%\begin{enumerate}
%\item[(1)] $\textrm{pd}_V(\kappa) = 1$ if and only if $m$ is principal.
%\item[(2)] We always have $\textrm{pd}_V(\kappa) \leq 2$.
%\end{enumerate}
\end{Theorem}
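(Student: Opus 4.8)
The statement is now essentially a corollary of the structural results established in this section, so the plan is simply to assemble them. First I would invoke Proposition \ref{finite dim valuation rings have countably exhaustive value groups}: since $V$ has finite Krull dimension, its value group $G$ is order-isomorphic to a subgroup of $\mathbb{R}^{\oplus d}$ with lexicographical order, and hence $G$ is countably exhaustive (the ``in particular'' clause of that proposition, which itself rests on Proposition \ref{ordered subgroups of R^n are exhaustive}).

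Next I would feed this into Theorem \ref{some cases when the residue field has projective dimension 2}. Part (2) of that theorem applies verbatim to a countably exhaustive value group and yields $\textrm{pd}_V(\kappa) \leq 2$, which is the first assertion. For the second assertion, part (1) of the same theorem says $\textrm{pd}_V(\kappa) = 1$ if and only if $G^+$ has a smallest element. It then remains only to translate this condition on $G^+$ back to the ideal $\frak m$, which is exactly the content of Lemma \ref{value groups with least positive element}: for a non-trivial valuation, $G^+$ has a smallest element if and only if $\frak m$ is principal. Since $V$ is assumed not to be a field, the valuation is non-trivial and the lemma applies. Chaining these equivalences gives $\textrm{pd}_V(\kappa) = 1 \iff \frak m$ principal.

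Because every ingredient is already proved, there is no real obstacle here; the only point requiring a word of care is the hypothesis ``$V$ is not a field'', which is what guarantees the valuation is non-trivial (so that $G \neq 0$, $\kappa$ is not free, and $\textrm{pd}_V(\kappa) \geq 1$) and thus lets us apply both Theorem \ref{some cases when the residue field has projective dimension 2} and Lemma \ref{value groups with least positive element} in the stated form. One could also remark that if one drops this hypothesis the statement still makes sense with $\textrm{pd}_V(\kappa) = 0$ in the field case, but the cleanest exposition keeps the non-field assumption explicit.
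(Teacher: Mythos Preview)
Your proposal is correct and follows essentially the same route as the paper's proof: invoke Proposition \ref{finite dim valuation rings have countably exhaustive value groups} to get that the value group is countably exhaustive, then apply Theorem \ref{some cases when the residue field has projective dimension 2}. The paper's proof is simply a two-sentence version of what you wrote; your explicit appeal to Lemma \ref{value groups with least positive element} for the ``$\mathrm{pd}_V(\kappa)=1 \iff \mathfrak m$ principal'' translation just unpacks what the paper leaves implicit.
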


\begin{proof}
If we consider $V$ as the valuation ring associated to a valuation on the fraction field of $V$ with value group $G$, then $G$ is countably exhaustive by Proposition \ref{finite dim valuation rings have countably exhaustive value groups}. The result now follows Theorem \ref{some cases when the residue field has projective dimension 2}.
\end{proof}

\vspace{1mm}

The bound on the projective dimension of the residue field yields:

\vspace{1mm}

\begin{Corollary}
\label{vanishing of torsion cohomology for finite dimensional valuation rings}
Let $V$ be a valuation ring of finite Krull dimension. Suppose that the maximal ideal $\frak m$ of $V$ is not principal. Then
for all $V$-modules $M$, $R^i\Gamma_\frak m(M) = 0$ for all $i \geq 3$.
\end{Corollary}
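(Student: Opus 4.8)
The plan is to deduce this immediately from the two main results already in hand, namely Theorem \ref{derived torsion characterization}(2) and Theorem \ref{projective dimension of valuation rings if finite Krull dim}. The point is that when $\frak m$ is not principal, the $\frak m$-torsion cohomology functors are not merely governed by the projective dimension of $\kappa$ — they literally \emph{are} the Ext functors $\textrm{Ext}^i_V(\kappa, \_)$, so a bound on $\textrm{pd}_V(\kappa)$ transfers verbatim into a vanishing statement for $R^i\Gamma_\frak m$.

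First I would note that since $\frak m$ is not principal, in particular $\frak m \neq 0$, so $V$ is not a field; hence Theorem \ref{projective dimension of valuation rings if finite Krull dim} applies and gives $\textrm{pd}_V(\kappa) \leq 2$. (This is where finiteness of the Krull dimension is used, via Proposition \ref{finite dim valuation rings have countably exhaustive value groups} feeding into Theorem \ref{some cases when the residue field has projective dimension 2}.) Consequently $\textrm{Ext}^i_V(\kappa, M) = 0$ for every $V$-module $M$ and every $i \geq 3$, since Ext in a degree exceeding the projective dimension of the first argument vanishes.

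Next I would invoke Theorem \ref{derived torsion characterization}(2): because $\frak m$ is not principal, $R^i\Gamma_\frak m(M) \cong \textrm{Ext}^i_V(\kappa, M)$ for all $i \geq 0$ and all $V$-modules $M$. Combining this isomorphism with the vanishing of $\textrm{Ext}^i_V(\kappa, M)$ for $i \geq 3$ established in the previous step yields $R^i\Gamma_\frak m(M) = 0$ for all $i \geq 3$, as claimed.

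There is essentially no obstacle here: the corollary is a formal consequence of the two cited theorems, and the only minor point to verify is the harmless observation that a non-principal maximal ideal forces $V$ to be a non-trivial valuation ring so that Theorem \ref{projective dimension of valuation rings if finite Krull dim} is applicable. All the real work has already been carried out in Section \ref{finiteness of projective dim}.
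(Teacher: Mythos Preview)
Your proof is correct and follows exactly the same approach as the paper: invoke Theorem \ref{derived torsion characterization}(2) to identify $R^i\Gamma_\frak m(M)$ with $\textrm{Ext}^i_V(\kappa, M)$, then use the bound $\textrm{pd}_V(\kappa)\leq 2$ from Theorem \ref{projective dimension of valuation rings if finite Krull dim} to conclude vanishing for $i\geq 3$. Your explicit remark that non-principality of $\frak m$ forces $V$ to be non-trivial (so that Theorem \ref{projective dimension of valuation rings if finite Krull dim} applies) is a nice touch that the paper leaves implicit.
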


\begin{proof}
Let $\kappa$ be the residue field. Since $\frak m$ is not principal, and $R^i\Gamma_\frak m(M) \cong \textrm{Ext}^i_V(\kappa, M)$ for all $i$ (Theorem \ref{derived torsion characterization}(2)), the result follows from the bound on the projective dimension of the residue field obtained in Theorem \ref{projective dimension of valuation rings if finite Krull dim} above.
\end{proof}

\vspace{1mm}

\begin{Remark}
\label{whenever countably generated, torsion vanishes}
Theorem \ref{some cases when the residue field has projective dimension 2} tells us more generally that $R^i\Gamma_\frak m$ vanishes for all $i \geq 3$ when $\frak m$ is countably generated (equivalently the value group is countably exhaustive). In Section \ref{an interesting example} we give an example of a valuation ring of infinite Krull dimension with non-finitely generated maximal ideal whose value group is countably exhaustive. Thus, countably exhaustive ordered abelian groups also include cases where the valuation ring has infinite Krull dimension.
\end{Remark}

\vspace{1mm}

\section{\textbf{Global dimension of valuation rings and torsion cohomology}}
\label{Global dimension of valuation rings of finite Krull dimension}

Recall that the global dimension of a ring $R$, denoted gldim($R$), is the supremum of the injective dimensions of all $R$-modules. One also has
$$\textrm{gldim}(R) = \sup\{\textrm{pd}_R(R/J): J \hspace{1mm} \textrm{is an ideal of} \hspace{1mm} R\} \hspace{4mm} \textrm{\cite[Theorem 4.1.2]{Wei94}}.$$ 
Surprisingly, valuation rings of finite Krull dimension have finite global dimension:

\vspace{1mm}

\begin{theorem}
\label{bound on global dimension}
Let $V$ be a valuation ring of finite Krull dimension. Then $\operatorname{gldim}(V) \leq 2$. Moreover, $\operatorname{gldim}(V) = 1$ if and only if $V$ is a discrete valuation ring.
%\begin{enumerate}
%\item[(1)] $gldim(V) = 1$ if and only if $V$ is a discrete valuation ring.
%\item[(2)] One always has $gldim(V) \leq 2$.
%\end{enumerate}
\end{theorem}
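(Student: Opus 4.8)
The plan is to use the formula $\operatorname{gldim}(V) = \sup\{\operatorname{pd}_V(V/J) : J \text{ is an ideal of } V\}$ recalled above, and to show that $\operatorname{pd}_V(V/J) \le 2$ for every ideal $J$ of $V$. From the short exact sequence $0 \to J \to V \to V/J \to 0$ and the freeness of $V$ one gets $\operatorname{pd}_V(V/J) \le \operatorname{pd}_V(J) + 1$ when $J \ne 0$ (and $V/J = V$ is free when $J = 0$), so everything reduces to the claim that $\operatorname{pd}_V(J) \le 1$ for an arbitrary ideal $J$ of a valuation ring $V$ of finite Krull dimension. This is exactly the ideal-theoretic generalization of Theorem \ref{some cases when the residue field has projective dimension 2} alluded to in the introduction, and I would prove it by adapting that argument.

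First I would show that every ideal $J$ of $V$ is either principal or countably generated by a family of elements with strictly decreasing values. Realize $V$ as the valuation ring of a valuation on $\operatorname{Frac}(V)$ with value group $G$; by Proposition \ref{finite dim valuation rings have countably exhaustive value groups} we may take $G$ to be an ordered subgroup of $\mathbb{R}^{\oplus d}$, $d = \dim V$, with the lexicographic order. A proper ideal $J$ is determined by its upward-closed value set $\Sigma_J := \{v(x) : 0 \ne x \in J\} \subseteq G^{+}$. Now run the recursive greatest-lower-bound procedure from the proof of Proposition \ref{ordered subgroups of R^n are exhaustive} verbatim, with $\Sigma_J$ in place of $G^{+}$: after at most $d$ steps one either finds that $\Sigma_J$ has a least element (so $J$ is principal) or produces a strictly decreasing sequence $(g_n)_{n \in \mathbb{N}}$ in $\Sigma_J$ with $\Sigma_J = \bigcup_n G^{\ge g_n}$; picking $x_n \in J$ with $v(x_n) = g_n$ gives $J = (x_1, x_2, x_3, \dots)$ with $v(x_1) > v(x_2) > v(x_3) > \cdots$. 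The point to verify is that the proof of Proposition \ref{ordered subgroups of R^n are exhaustive} never used that the set in question was $G^{+}$ rather than an arbitrary upward-closed subset of $G^{\ge 0}$; this is the case, so the argument carries over unchanged. (One could instead quote Osofsky's theorem from Remark \ref{remark on Osofsky}, which gives $\operatorname{pd}_V(J) \le 1$ iff $J$ is countably generated, but a self-contained argument in the style of Section \ref{finiteness of projective dim} is preferable.)

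If $J$ is principal it is free, so $\operatorname{pd}_V(J) = 0$. Otherwise, with $J = (x_1, x_2, \dots)$ and $v(x_1) > v(x_2) > \cdots$ as above, the rest is a word-for-word repetition of the proof of Theorem \ref{some cases when the residue field has projective dimension 2}: for the surjection $\bigoplus_{n \in \mathbb{N}} V \twoheadrightarrow J$, $f_i \mapsto x_i$, the strictly decreasing values force $x_{i+1} \mid x_i$ for every $i$, and using that $V$ is a domain one checks that the kernel is freely generated by $\{f_i - (x_i/x_{i+1}) f_{i+1} : i \in \mathbb{N}\}$. Hence $\operatorname{pd}_V(J) \le 1$ in all cases, so $\operatorname{pd}_V(V/J) \le 2$ for every $J$, and $\operatorname{gldim}(V) \le 2$.

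For the ``moreover'', $\operatorname{gldim}(V) \le 1$ holds iff $\operatorname{pd}_V(V/J) \le 1$ for every ideal $J$, iff (via $0 \to J \to V \to V/J \to 0$) every ideal of $V$ is projective, iff --- by Kaplansky's theorem that projective modules over a local ring are free, together with the fact that a free ideal of a domain is zero or principal --- every ideal of $V$ is principal, i.e.\ $V$ is a PID. A valuation domain that is a PID is either a field or a discrete valuation ring, and $\operatorname{gldim}(V) = 0$ precisely when $V$ is a field, so $\operatorname{gldim}(V) = 1$ if and only if $V$ is a DVR. I expect the only real obstacle to be the bookkeeping in the first step --- confirming that the recursion of Proposition \ref{ordered subgroups of R^n are exhaustive} goes through for an arbitrary upward-closed subset of $G$ --- but the original proof already isolates exactly this combinatorial content, so this should be routine.
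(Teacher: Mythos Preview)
Your proposal is correct and follows essentially the same approach as the paper: the paper isolates your first step (every ideal of $V$ is countably generated with decreasing values, proved by running the recursion of Proposition~\ref{ordered subgroups of R^n are exhaustive} on $\Sigma_J$ in place of $G^+$) as a separate Lemma~\ref{ideals of finite dim val rings are countably generated}, and then repeats the kernel computation of Theorem~\ref{some cases when the residue field has projective dimension 2}(2) verbatim, exactly as you describe. The ``moreover'' part is also argued identically via Kaplansky's theorem and the observation that free ideals are principal.
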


\vspace{1mm}

Before giving the proof, note that Theorem \ref{bound on global dimension} immediately implies the following vanishing result on torsion cohomology with respect to \emph{arbitrary} ideals, generalizing Corollary \ref{vanishing of torsion cohomology for finite dimensional valuation rings}:

\vspace{1mm}

\begin{theorem}
\label{vanishing of torsion cohomology with respect to an arbitrary ideal}
Let $I$ be an ideal of a valuation ring $V$ of finite Krull dimension, and $\Gamma_I$ be the $I$-torsion functor. Then for all $V$-modules $M$ and $i \geq 3$, $R^i\Gamma_I(M) = 0$.
\end{theorem}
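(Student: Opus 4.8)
The plan is to deduce this directly from the global dimension bound of Theorem \ref{bound on global dimension}, using the identification of torsion cohomology as a filtered colimit of Ext modules recorded at the start of Section \ref{torsion cohomology}. Recall that for any ideal $I$ of a ring $A$ and any $A$-module $M$ one has the natural isomorphism
$$R^i\Gamma_I(M) \cong \textrm{colim}_{t \in \mathbb{N}} \textrm{Ext}^i_A(A/I^t, M),$$
coming from $\Gamma_I(M) \cong \textrm{colim}_t \textrm{Hom}_A(A/I^t, M)$ together with the fact that cohomology commutes with filtered colimits. So the entire question reduces to controlling $\textrm{Ext}^i_V(V/I^t, M)$ for each fixed $t$.

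First I would invoke Theorem \ref{bound on global dimension}: since $V$ has finite Krull dimension, $\textrm{gldim}(V) \leq 2$. By the standard characterization of global dimension (as quoted just before Theorem \ref{bound on global dimension}, $\textrm{gldim}(R) = \sup\{\textrm{pd}_R(R/J) : J \text{ an ideal of } R\}$, and more to the point the fact that global dimension $\leq n$ forces $\textrm{pd}_R(N) \leq n$ for \emph{every} $R$-module $N$), we get $\textrm{pd}_V(V/I^t) \leq 2$ for every $t \in \mathbb{N}$. Consequently $\textrm{Ext}^i_V(V/I^t, M) = 0$ for all $i \geq 3$, all $t \in \mathbb{N}$, and all $V$-modules $M$.

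Finally I would pass to the colimit: since each term of the filtered system $\big(\textrm{Ext}^i_V(V/I^t, M)\big)_{t \in \mathbb{N}}$ vanishes for $i \geq 3$, so does
$$R^i\Gamma_I(M) \cong \textrm{colim}_{t \in \mathbb{N}} \textrm{Ext}^i_V(V/I^t, M) = 0 \qquad (i \geq 3),$$
which is exactly the claim. There is essentially no obstacle internal to this argument: all the work is concentrated in Theorem \ref{bound on global dimension} (itself resting on the projective dimension computations of Section \ref{finiteness of projective dim} and on the extension from $V/\mathfrak m$ to $V/J$ for arbitrary ideals $J$). Once that global dimension bound is in hand, the present theorem is a formal consequence of the colimit description of $R^i\Gamma_I$, so the only thing to be slightly careful about is citing the correct form of the global-dimension/projective-dimension relationship and noting that it applies uniformly to all the modules $V/I^t$.
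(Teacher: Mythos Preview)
Your argument is correct. It differs in emphasis from the paper's proof, though both rest entirely on Theorem~\ref{bound on global dimension}. The paper argues on the injective side: since $\operatorname{gldim}(V)\leq 2$, every $V$-module $M$ has injective dimension at most $2$, so the right derived functors $R^i\Gamma_I$ of the left-exact functor $\Gamma_I$ vanish for $i\geq 3$ directly, without invoking the colimit description of torsion cohomology. You instead work on the projective side, bounding $\operatorname{pd}_V(V/I^t)$ and killing each $\operatorname{Ext}^i_V(V/I^t,M)$ term in the filtered colimit. The paper's route is marginally shorter (one line, no need to quote the colimit formula or track the individual $V/I^t$), while yours makes the connection to the projective-dimension computations of Section~\ref{finiteness of projective dim} more explicit. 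Either way the content is the same: once $\operatorname{gldim}(V)\leq 2$ is known, the vanishing is formal.
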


\begin{proof}
Since $\textrm{gldim}(V) \leq 2$ (Theorem \ref{bound on global dimension}), the injective dimension of any $V$-module $M$ is also bounded above by 2. The vanishing of $R^i\Gamma_I(M)$, for $i \geq 3$, follows.
\end{proof}

\vspace{1mm}

For the proof of Theorem \ref{bound on global dimension}, the following Lemma will be useful. It generalizes Proposition \ref{finite dim valuation rings have countably exhaustive value groups}. The strategy of proof is similar to Proposition \ref{ordered subgroups of R^n are exhaustive}.

\vspace{1mm}

\begin{lemma}
\label{ideals of finite dim val rings are countably generated}
Let $v$ be a valuation on a field $K$ with value group $G$. Suppose the corresponding valuation ring $V$ has finite Krull dimension. If $J$ is a non-zero ideal of $V$, there exists a sequence $(x_n)_n \in J$ such that $v(x_1) \geq v(x_2) \geq \dots$, and $J = (x_1, x_2, x_3, \dots)$.
\end{lemma}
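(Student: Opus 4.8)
\textbf{Proof plan for Lemma \ref{ideals of finite dim val rings are countably generated}.}

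The plan is to mimic the recursive argument in the proof of Proposition \ref{ordered subgroups of R^n are exhaustive}, but now working inside the set of values $v(J \setminus \{0\}) \subseteq G$ rather than all of $G^+$. By Proposition \ref{finite dim valuation rings have countably exhaustive value groups} we may assume $G$ is an ordered subgroup of $\mathbb{R}^{\oplus d}$ with lexicographical order, where $d$ is the Krull dimension of $V$; let $\pi_i : \mathbb{R}^{\oplus d} \to \mathbb{R}$ be the $i$-th coordinate projection. Set $S := \{v(x) : x \in J, x \neq 0\}$. The key structural observation is that $S$ is an \emph{upward-closed} subset of $G^+$ in the sense that if $s \in S$ and $g \in G$ with $g \geq s$, then $g \in S$ — this is exactly the statement that $J$ is an ideal and that divisibility in $V$ is controlled by the value (if $v(x) \le v(y)$ then $x \mid y$). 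Finding a sequence $(x_n)_n$ with $v(x_1) \ge v(x_2) \ge \dots$ and $J = (x_1, x_2, \dots)$ is then equivalent to finding a decreasing sequence $(s_n)_n$ in $S$ with $S = \bigcup_n G^{\ge s_n}$; given such a sequence, pick $x_n \in J$ with $v(x_n) = s_n$, and the upward-closedness of $S$ together with the ideal property forces $(x_1, x_2, \dots) = J$.

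So the real content is to produce such a cofinal-from-below decreasing sequence in $S$. Here I would run the recursion of Proposition \ref{ordered subgroups of R^n are exhaustive} verbatim with $S$ in place of $G^+$: let $\alpha_1 := \inf \pi_1(S)$ (with the convention $\inf = -\infty$ if unbounded below, and noting $\alpha_1 \ge 0$ since $S \subseteq G^+$). If $\alpha_1 \notin \pi_1(S)$, choose $s_n \in S$ with $\pi_1(s_n) \downarrow \alpha_1$; then $s_1 \ge s_2 \ge \dots$ by the lexicographic order and $S = \bigcup_n G^{\ge s_n}$, and we are done. If $\alpha_1 \in \pi_1(S)$, pick $\omega_1 \in S$ with $\pi_1(\omega_1) = \alpha_1$, set $S_1 := S \cap G^{\le \omega_1}$, let $\alpha_2 := \inf \pi_2(S_1)$, and repeat. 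Because $\rho(G) = d < \infty$, the embedding into $\mathbb{R}^{\oplus d}$ means this process terminates after at most $d$ steps: either at some stage $j$ the infimum $\alpha_j$ of $\pi_j(S_{j-1})$ is not attained, and the approximating sequence inside $S_{j-1}$ gives the desired decreasing cofinal sequence in $S$; or $\alpha_j \in \pi_j(S_{j-1})$ for all $j \le d$, in which case $\omega := (\alpha_1, \dots, \alpha_d)$ lies in $S$ and is its smallest element, so the constant sequence works and $J = (x)$ is principal.

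The main obstacle — and the only place a new idea beyond Proposition \ref{ordered subgroups of R^n are exhaustive} is needed — is verifying carefully that once we have the value sequence $(s_n)_n$ with $S = \bigcup_n G^{\ge s_n}$, the chosen elements $x_n \in J$ actually generate $J$. For this, take an arbitrary nonzero $y \in J$; then $v(y) \in S$, so $v(y) \ge s_n = v(x_n)$ for some $n$, whence $x_n \mid y$ in $V$, i.e. $y \in (x_n) \subseteq (x_1, x_2, \dots)$. This shows $J \subseteq (x_1, x_2, \dots)$, and the reverse inclusion is immediate since each $x_n \in J$. One should also double-check the bookkeeping that at each recursive stage the elements produced in $S_{j-1}$ have their first $j-1$ coordinates equal to $(\alpha_1, \dots, \alpha_{j-1})$ — this is the same observation used for the elements $t_n$ in the proof of Proposition \ref{ordered subgroups of R^n are exhaustive}, and it guarantees that a sequence decreasing in the $j$-th coordinate is genuinely decreasing in $G$ and remains cofinal from below in all of $S$, not merely in $S_{j-1}$.
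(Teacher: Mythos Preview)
Your proposal is correct and follows essentially the same approach as the paper: embed $G$ in $\mathbb{R}^{\oplus d}$ via Proposition \ref{finite dim valuation rings have countably exhaustive value groups}, set $S = v(J \setminus \{0\})$, observe that $S$ is upward-closed in $G$, and then rerun the recursive argument of Proposition \ref{ordered subgroups of R^n are exhaustive} with $S$ in place of $G^+$ to produce the decreasing cofinal sequence $(s_n)$. The paper's proof is in fact terser than yours---it simply says ``replacing $G^+$ by $S$ everywhere in the proof of Proposition \ref{ordered subgroups of R^n are exhaustive}''---so the extra verification you supply (that the $x_n$ actually generate $J$, and that the bookkeeping on the first $j-1$ coordinates goes through) is exactly the content left implicit there.
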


\begin{proof}
We may assume $G$ is a subgroup of $\mathbb{R}^{\oplus n}$, the latter being ordered lexicographically (Proposition \ref{finite dim valuation rings have countably exhaustive value groups}). We may also assume $J \neq V$. Consider the set
$$S:= v(J - \{0\}).$$
Note $S$ has the property that if $x \in S$, then $G^{\geq x} \subseteq S$. Replacing $G^+$ by $S$ everywhere in the proof of Proposition \ref{ordered subgroups of R^n are exhaustive}, {we see that one can choose elements $s_1 \geq s_2 \geq s_3 \geq \dots$ in $S$ such that
$$S = \bigcup_{n} G^{\geq s_n}.$$
Picking $x_n \in J$ satisfying $v(x_n) = s_n$, we get $v(x_1) \geq v(x_2) \geq v(x_3) \geq \dots$ and $J = (x_1, x_2, x_3, \dots)$.} 
\end{proof}

\vspace{1mm}

\begin{remark}
In \cite[Corollary 36]{Cou03}, Lemma 5.3 is proved, more generally, for  valuation rings $V$ such that Spec($V$) is countable. But we hope our simple proof will be of some benefit.
\end{remark}

\vspace{1mm}

\begin{proof}[\textbf{Proof of Theorem \ref{bound on global dimension}}]
We may assume $V$ is not a field (fields have global dimension 0). If the global dimension of $V$ equals $1$, then $\textrm{pd}_V(V/J) \leq 1$, for all ideals $J$ in $V$. The latter is equivalent to the projectivity of $J$, which happens only when $J$ is free of rank $\leq 1$ (any ideal of a ring which is free as a module must have rank $\leq 1$). But a free ideal of rank $\leq 1$ is principal, which shows that $V$ must be a Noetherian valuation ring, that is it is discrete. On the other hand, a discrete valuation ring is a dimension 1 regular local ring, and so has global dimension 1. This proves the second assertion of the theorem.

\vspace{1mm}

Now assume that $\textrm{gldim}(V) > 1$. Then there exists an ideal $J$ of $V$ which is not finitely generated. By Lemma \ref{ideals of finite dim val rings are countably generated} one can pick a sequence $(x_n)_n \in J$ such that $v(x_1) > v(x_2) > \dots$ and $J = (x_1, x_2, x_3, \dots)$. The argument in the proof Theorem \ref{some cases when the residue field has projective dimension 2}(2) can be repeated verbatim for $J$ instead of the maximal ideal $\frak m$ to see that $\textrm{pd}_V(V/J) = 2$. Since every ideal of $V$ is countably generated (Lemma \ref{ideals of finite dim val rings are countably generated}), $V$ has global dimension $2$.
\end{proof}

\vspace{1mm}

\begin{remark}
Theorem \ref{bound on global dimension} implies that modules over valuation rings of finite Krull dimension have finite injective dimension. Injective modules over valuation rings share many common traits with injective modules over Noetherian rings. We refer the reader to \cite{Mat59}.
\end{remark}

\vspace{1mm}

\section{\textbf{Sheaf and local cohomology of valuation rings}}
\label{Local Cohomology of Valuation Rings}

Let $X$ be a topological space. Let $Z \subseteq X$ be a closed subset, and $U = X - Z$. Let $\mathfrak{Ab}_X$ denote the category of sheaves of abelian groups on $X$, and $\mathfrak{Ab}$ the category of abelian groups. We have a covariant functor
$$\Gamma_Z: \mathfrak{Ab}_X \rightarrow \mathfrak{Ab},$$
where for a sheaf $\mathcal F$, 
$$\Gamma_Z(\mathcal F) := \textrm{ker(res}^X_U: \mathcal{F}(X) \rightarrow \mathcal{F}(U)).$$
In other words, $\Gamma_Z(\mathcal F)$ is the set of global sections of $\mathcal F$ whose support is contained in $Z$. The functor $\Gamma_Z$ is clearly left-exact, and the right derived functors of $\Gamma_Z$, denoted $H^i\Gamma_Z$, are the \emph{local cohomology functors with support in $Z$}.

\vspace{1mm}

We now specialize to the case where $X$ = Spec($V$), for a valuation ring $V$. The goal will be to prove the following result:

\begin{theorem}
\label{vanishing theorem for local cohomology for valuation rings}
Let $Z$ be a closed subset of $X = Spec(V)$, for a valuation ring $V$, and $U = X - Z$. For a sheaf of abelian groups $\mathcal F$ on $X$, we have the following:
\begin{enumerate}
\item[(1)] $H^i\Gamma_Z(\mathcal F) \cong H^{i-1}(U, \mathcal{F}|_U)$ for all $i>1$, and $H^1\Gamma_Z(\mathcal F) \cong coker\big{(}\mathcal{F}(X)\xrightarrow{res^X_U} \mathcal{F}(U)\big{)}$.
\item[(2)] If $V$ has finite Krull dimension or if $U$ is quasi-compact, then $H^i\Gamma_Z(\mathcal F) = 0$ for all $i > 1$.
\end{enumerate}
\end{theorem}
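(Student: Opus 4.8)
The plan is to reduce the theorem to two facts already available in the excerpt: the vanishing of higher sheaf cohomology of \emph{arbitrary} abelian sheaves on the spectrum of a local ring (Lemma \ref{sheaf cohomology vanishes for local rings}), and the fact that open subsets of $\operatorname{Spec}(V)$ are well understood via the chain structure of the prime spectrum of a valuation ring (Lemma \ref{closed and open subsets of spectrum of a valuation ring}). Part (1) is then formal homological algebra, and part (2) is an application of the local-ring vanishing to the open set $U$ itself.

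For (1) I would run the standard long exact sequence comparing the functors $\Gamma_Z$, $\Gamma(X,-)$ and $\Gamma(U,(-)|_U)$. Choose an injective resolution $\mathcal F \to \mathcal I^\bullet$ in $\mathfrak{Ab}_X$. Each $\mathcal I^n$ is flasque, so the restriction $\mathcal I^n(X) \to \mathcal I^n(U)$ is surjective with kernel $\Gamma_Z(\mathcal I^n)$, and $\mathcal I^\bullet|_U$ is a flasque (hence $\Gamma(U,-)$-acyclic) resolution of $\mathcal F|_U$. Thus $0 \to \Gamma_Z(\mathcal I^\bullet) \to \Gamma(X,\mathcal I^\bullet) \to \Gamma(U,\mathcal I^\bullet|_U) \to 0$ is a short exact sequence of complexes, whose cohomology long exact sequence reads
$$\cdots \to H^i\Gamma_Z(\mathcal F) \to H^i(X,\mathcal F) \to H^i(U,\mathcal F|_U) \to H^{i+1}\Gamma_Z(\mathcal F) \to \cdots .$$
Since $X = \operatorname{Spec}(V)$ is the spectrum of a local ring, $H^i(X,\mathcal F) = 0$ for all $i > 0$ by Lemma \ref{sheaf cohomology vanishes for local rings}. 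Feeding this in, the segment $\mathcal F(X) \to \mathcal F(U) \to H^1\Gamma_Z(\mathcal F) \to H^1(X,\mathcal F) = 0$ yields $H^1\Gamma_Z(\mathcal F) \cong \operatorname{coker}(\operatorname{res}^X_U \colon \mathcal F(X) \to \mathcal F(U))$, while for $i \geq 2$ the segment $0 = H^{i-1}(X,\mathcal F) \to H^{i-1}(U,\mathcal F|_U) \to H^i\Gamma_Z(\mathcal F) \to H^i(X,\mathcal F) = 0$ yields $H^i\Gamma_Z(\mathcal F) \cong H^{i-1}(U,\mathcal F|_U)$.

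For (2), by part (1) it suffices to prove that $H^j(U,\mathcal G) = 0$ for every abelian sheaf $\mathcal G$ on $U$ and every $j > 0$, and for this it is enough to identify $U$ with the spectrum of a local ring. If $V$ has finite Krull dimension then $\operatorname{Spec}(V)$ is a finite chain of primes, so $U$ is automatically quasi-compact; hence both hypotheses reduce to the case where $U$ is quasi-compact. Writing $U = D(f_1) \cup \cdots \cup D(f_n)$ as a finite union of basic opens, note that each complement $V(f_i) = \{\mathfrak q : f_i \in \mathfrak q\}$ is an up-set in the totally ordered set $\operatorname{Spec}(V)$, so each $D(f_i)$ is an initial segment; initial segments of a chain are themselves totally ordered by inclusion, so $U$ equals the largest of the $D(f_i)$, say $U = D(f)$. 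Then $U \cong \operatorname{Spec}(V[1/f])$ as locally ringed spaces, and $V[1/f]$, being an overring of the valuation ring $V$ inside $\operatorname{Frac}(V)$, is again a valuation ring, hence local. Applying Lemma \ref{sheaf cohomology vanishes for local rings} to $\operatorname{Spec}(V[1/f])$ gives $H^j(U,\mathcal F|_U) = 0$ for $j > 0$, and combined with part (1) this proves $H^i\Gamma_Z(\mathcal F) = 0$ for all $i > 1$.

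The main obstacle is the last identification: one needs $U$ to be not merely affine but the spectrum of a \emph{local} ring, since Lemma \ref{sheaf cohomology vanishes for local rings} has no content otherwise. This is exactly where the valuation hypothesis enters — the linear ordering of the primes of $V$ is what forces a quasi-compact open to collapse to a single basic open $D(f)$ with $V[1/f]$ local. The rest is the routine long-exact-sequence bookkeeping of part (1), modulo the standard facts that flasque sheaves have surjective restriction maps and restrict to flasque sheaves, and that basic opens of a spectrum are themselves spectra.
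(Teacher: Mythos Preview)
Your proof is correct and follows essentially the same route as the paper: derive the long exact sequence linking $H^i\Gamma_Z$, $H^i(X,-)$, and $H^i(U,-)$, kill the middle terms via Lemma \ref{sheaf cohomology vanishes for local rings}, and for (2) use the chain structure of $\operatorname{Spec}(V)$ to show a quasi-compact $U$ is a single $D(f)$ with $V_f$ local. The only cosmetic differences are that the paper cites \cite[Corollary 1.9]{GH67} for the long exact sequence where you build it from an injective resolution, and the paper invokes Lemma \ref{closed and open subsets of spectrum of a valuation ring} directly where you reprove its content inline.
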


\vspace{1mm}

%\begin{remark}
%\label{comment on the proof of vanishing of local cohomology}
Theorem \ref{vanishing theorem for local cohomology for valuation rings} will follow from vanishing of higher sheaf cohomology of abelian sheaves on the spectrum of any local ring (Lemma \ref{sheaf cohomology vanishes for local rings}), and some peculiarities of the Zariski topology of the spectrum of a valuation ring. 
%\end{remark}

\vspace{2mm}

The relevant properties of the Zariski topology are recorded first:

\vspace{1mm}

\begin{lemma}
\label{closed and open subsets of spectrum of a valuation ring}
Let $V$ be a valuation ring. 
\begin{enumerate}
\item[(1)] Any non-empty closed subset of Spec(V) is irreducible.
\item[(2)] An open subset $U \subseteq \operatorname{Spec}(V)$ is quasi-compact if and only if there exists $f \in V$ such that $U = D(f)$. In particular, any affine open subscheme of Spec(V) is of the form $D(f)$, and all quasicompact opens of $\operatorname{Spec}(V)$ are affine.
\item[(3)] If $V$ has finite Krull dimension, then any open subscheme of $Spec(V)$ is affine.
\end{enumerate}
\end{lemma}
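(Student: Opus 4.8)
The plan is to prove the three statements in turn, using throughout the fundamental fact that the ideals of a valuation ring are totally ordered by inclusion; in particular the primes of $V$ form a chain, so $\operatorname{Spec}(V)$ is totally ordered under the specialization order (which for $\operatorname{Spec}$ of a ring agrees with inclusion of primes). For (1), I would show that every nonempty closed set $Z = V(I)$ has a generic point. The primes of $V$ containing $I$ form a chain, and their intersection is exactly $\sqrt{I}$; since the intersection of a chain of prime ideals is again prime, $\sqrt{I}$ is itself a prime belonging to $Z$, hence the unique minimal point of $Z$. Therefore $Z = \overline{\{\sqrt{I}\}}$, and any closure of a point is irreducible. (One could instead verify that $\sqrt{I}$ is prime directly from the valuation, by comparing $v$-values of a factorization, but the chain-of-primes argument is cleaner.)

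For (2), the backward implication is immediate: $D(f)$, with the restricted structure sheaf, is $\operatorname{Spec}(V_f)$, hence affine and in particular quasi-compact. For the forward implication, I would cover the quasi-compact open $U$ by basic opens $D(f_i)$ and extract a finite subcover, so that $U = D(f_1) \cup \cdots \cup D(f_n)$. The finitely generated ideal $(f_1, \dots, f_n)$ is principal in a valuation ring — it is generated by whichever nonzero $f_j$ has the least value, since that $f_j$ divides all the others — so $U = D(f_1) \cup \cdots \cup D(f_n) = D(f_j)$. The two remaining assertions are then formal: an affine open subscheme is quasi-compact (being homeomorphic to $\operatorname{Spec}$ of a ring), hence of the form $D(f)$ by what was just proved; and conversely every $D(f) \cong \operatorname{Spec}(V_f)$ is affine.

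For (3), I would first observe that finite Krull dimension forces $\operatorname{Spec}(V)$ to be a finite set: the primes of $V$ form a single chain, and an infinite chain contains strictly increasing finite chains of unbounded length, contradicting finiteness of $\dim V$. A finite topological space — and every subspace of one — is quasi-compact, so every open subset $U \subseteq \operatorname{Spec}(V)$ is quasi-compact, hence affine by part (2).

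I do not expect a serious obstacle here: every step is short. The one point requiring genuine, if elementary, input is the fact used in (1) that $\sqrt{I}$ is prime, i.e.\ that the intersection of a chain of prime ideals is prime; everything else amounts to bookkeeping with the order structure on $\operatorname{Spec}(V)$ together with the fact that finitely generated ideals of a valuation ring are principal.
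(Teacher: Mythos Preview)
Your proposal is correct and follows essentially the same approach as the paper. The only cosmetic difference is in (2): the paper phrases the key step as ``the open subsets of $\operatorname{Spec}(V)$ are totally ordered by inclusion, so $U$ equals one of the $D(f_i)$,'' while you argue via ``$(f_1,\dots,f_n)$ is principal in a valuation ring''; these are two ways of saying the same thing, and (1) and (3) match the paper's argument almost verbatim.
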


\begin{proof}
(1) follows from the fact that in a valuation ring, any radical ideal is a prime ideal or the whole ring. That a proper radical ideal $I \subsetneq V$ is a prime ideal follows easily from the fact that the prime ideals that contain $I$ are totally ordered by inclusion.

\vspace{1mm}

For (2), the `if' part is clear. On the other hand, if $U$ is a quasi-compact open subscheme of Spec($V$), then there exist $f_1, \dots, f_n \in V$ such that $U = D(f_1) \cup \dots \cup D(f_n)$. Since the open subsets of Spec($V$) are totally ordered by inclusion, $U$ must equal $D(f_i)$ for some $i$. Quasi-compactness of affine opens now gives us the second statement of (2).
\vspace{1mm}

(3) is a consequence of (2). If $V$ has finite Krull dimension, the underlying set of Spec($V$) is finite. Hence any open subscheme of Spec($V$) is quasi-compact, thus affine by (2).\end{proof}

\vspace{1mm}

\begin{remark}
Lemma \ref{closed and open subsets of spectrum of a valuation ring}(3) is false without the hypothesis that $V$ has finite Krull dimension. We construct a counter-example in Section \ref{an interesting example}.
\end{remark}

\vspace{1mm}

%A key step in the proof of \cite[Tag 02UZ]{stacks-project} is the following lemma, which is reproduced word for word from \cite{stacks-project}. A more general version of this Lemma may be found in \cite[Proposition 3.6.1]{Gro57}.

%\vspace{1mm}

%\begin{lemma}
%\label{stacks project lemma}
%\cite[\href{http://stacks.math.columbia.edu/tag/02UX}{Tag 02UX}]{stacks-project} 
%Let $X$ be a topological space. Let $d \geq 0$ be an integer. Assume
%\begin{enumerate}
%\item $X$ is quasi-compact,
%\item the quasi-compact opens form a basis for $X$, and
%\item the intersection of two quasi-compact opens is quasi-compact.
%\item $H^p(X, j_!\underline{\mathbb{Z}}_U) = 0$ for all $p > d$
%and any quasi-compact open $j : U \to X$.
%\end{enumerate}
%Then $H^p(X, \mathcal{F}) = 0$ for all $p > d$
%and any abelian sheaf $\mathcal{F}$ on $X$.
%\end{lemma}

%\vspace{1mm}

%\begin{remark}
%Lemma \ref{stacks project lemma} is false without the hypothesis that $X$ is quasi-compact, even if $X$ satisfies the other properties (see Remark \ref{falsehood of Key Lemma in section 6}).
%\end{remark}

%\vspace{1mm}

%Applying Lemma \ref{stacks project lemma} to $X$ = Spec($V$), for a valuation ring $V$, we obtain the following result of independent interest:

We now show the triviality of higher sheaf cohomology on the spectrum of any local ring.

\vspace{1mm}

\begin{lemma}
\label{sheaf cohomology vanishes for local rings}
Let $R$ be a local ring, $X = \operatorname{Spec}(R)$. Then the global sections functor on the category of sheaves of abelian groups on $X$ is exact. In particular, for any sheaf of abelian groups $\mathcal F$ on $X$, $H^i(X, \mathcal F) = 0$ for all $i > 0$.
\end{lemma}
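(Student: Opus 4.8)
The plan is to show that on $X = \operatorname{Spec}(R)$ with $R$ local, the global sections functor $\mathcal F \mapsto \mathcal F(X)$ is naturally isomorphic to the stalk functor at the unique closed point of $X$, and then to exploit the exactness of stalks.

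First I would record the key topological observation: if $\frak m$ denotes the maximal ideal of $R$, then the only open subset of $X$ containing the point $\frak m$ is $X$ itself. Indeed, every open subset of $X$ has the form $X - V(I)$ for some ideal $I$ of $R$; if such a set contains $\frak m$, then $I \not\subseteq \frak m$, which forces $I = R$ and hence $X - V(I) = X$. (This is exactly the feature of local rings we are exploiting, analogous to the fact that $\operatorname{Spec}(V)$ is irreducible in Lemma \ref{closed and open subsets of spectrum of a valuation ring}.)

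Consequently, for any sheaf of abelian groups $\mathcal F$ on $X$, the stalk at $\frak m$ is $\mathcal F_{\frak m} = \operatorname{colim}_{\frak m \in U} \mathcal F(U)$, a filtered colimit over a system whose sole object is $U = X$; hence the canonical map $\mathcal F(X) \to \mathcal F_{\frak m}$ is an isomorphism, naturally in $\mathcal F$. This gives a natural isomorphism of functors $\Gamma(X, -) \cong (-)_{\frak m} \colon \mathfrak{Ab}_X \to \mathfrak{Ab}$. Since taking stalks is exact on sheaves of abelian groups (exactness of a sequence of such sheaves can be tested stalkwise), the functor $\Gamma(X, -)$ is exact. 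Finally, because $\Gamma(X, -)$ is exact, applying it to an injective resolution $0 \to \mathcal F \to I^0 \to I^1 \to \cdots$ in $\mathfrak{Ab}_X$ yields an exact complex, so $H^i(X, \mathcal F) = R^i\Gamma(X, \mathcal F) = 0$ for all $i > 0$.

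I do not anticipate a serious obstacle here; the only point requiring care is the elementary verification that $X$ is the unique open neighborhood of $\frak m$, which is precisely what collapses the defining colimit of the stalk onto the module of global sections and thereby drives the whole argument.
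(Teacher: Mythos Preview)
Your proposal is correct and follows essentially the same approach as the paper: both argue that $X$ is the unique open neighborhood of the closed point $\frak m$, so the stalk at $\frak m$ coincides with global sections, and exactness of the stalk functor then gives exactness of $\Gamma(X,-)$ and hence vanishing of higher cohomology. Your version simply spells out in more detail the verification that $X$ is the only open set containing $\frak m$ and the final passage from exactness to vanishing of derived functors.
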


\begin{proof}
Let $\Gamma$ be the global sections functor. Since the only open set of $X$ that contains the unique closed point is $X$ itself, the stalk of any sheaf at the closed point is the global sections of that sheaf. Since taking stalks preserves exactness, $\Gamma$ is an exact functor, and all higher sheaf cohomology groups vanish.
\end{proof}

\vspace{1mm}

We can now derive Theorem \ref{vanishing theorem for local cohomology for valuation rings}.

\vspace{1mm}

\begin{proof}[\textbf{Proof of Theorem \ref{vanishing theorem for local cohomology for valuation rings}}]
We have a well-known long exact sequence involving sheaf and local cohomology \cite[Corollary 1.9]{GH67}:
$$
0 \rightarrow \Gamma_Z(\mathcal F) \rightarrow \mathcal{F}(X) \xrightarrow{\textrm{res}^X_U} \mathcal{F}(U) \rightarrow H^1\Gamma_Z(\mathcal F) \rightarrow H^1(X, \mathcal{F}) \rightarrow H^1(U, \mathcal{F}|_U) \rightarrow H^2\Gamma_Z(\mathcal F) \rightarrow \dots
$$
Here $H^i(X, \mathcal F)$ and $H^i(U, \mathcal{F}|_U)$ stand for sheaf cohomology. Since $H^i(X,\mathcal F) = 0$ for all $i \geq 1$ by Lemma \ref{sheaf cohomology vanishes for local rings}, we get 
$$H^i\Gamma_Z(\mathcal F) \cong H^{i-1}(U, \mathcal{F}|_U)$$
for all $i > 1$. The exactness of
$$0 \rightarrow \Gamma_Z(\mathcal F) \rightarrow \mathcal{F}(X) \xrightarrow{\textrm{res}^X_U} \mathcal{F}(U) \rightarrow H^1\Gamma_Z(\mathcal F) \rightarrow 0$$ 
shows that $H^1\Gamma_Z(\mathcal F)$ is the cokernel of $\mathcal{F}(X) \xrightarrow{\textrm{res}^X_U} \mathcal{F}(U)$. This proves (1).

\vspace{1mm}

For (2), if $V$ has finite Krull dimension or if $U$ is quasi-compact, then $U$ is a distinguished affine open subscheme $D(f)$ of $X$ by Lemma \ref{closed and open subsets of spectrum of a valuation ring}. In particular, $V_f$ is also a valuation ring, and so $U$ is also the spectrum of a valuation ring. Thus, Lemma \ref{sheaf cohomology vanishes for local rings} implies $H^i(U, \mathcal{F}|_U) = 0$ for $i \geq 1$. So from (1) we get $H^i\Gamma_Z(\mathcal F) = 0$ for 
$i > 1$. 
\end{proof}

%\vspace{1mm}

%\begin{remark}
%Restricting ourselves to the category of quasi-coherent modules on Spec($V$) for a comparison of torsion cohomology and local cohomology, we see that local cohomology forgets cohomology in degree 2, whereas torsion cohomology does not (Theorem \ref{vanishing of torsion cohomology with respect to an arbitrary ideal}, Proposition \ref{derived torsion cohomology does not equal local cohomology part 1}). However, both theories forget cohomology in degrees 3 and higher.\end{remark}

\vspace{1mm}

Let $X = \operatorname{Spec}(A)$ for a Noetherian ring $A$, $M$ an $A$-module with associated sheaf $\widetilde{M}$, $I$ an ideal, and $Z = \mathbb{V}(I)$. Then the $A$-modules $R^i\Gamma_I(M)$ are isomorphic to the $A$-modules $H^i\Gamma_Z(\widetilde{M})$ for all $i \geq 0$ \cite[Exercise III.3.3]{Har77}. However, we show that the functors $\Gamma_I$ and $\Gamma_{\mathbb{V}(I)}$ give non-isomorphic $A$-modules when $A$ is a valuation ring:
\vspace{1mm}

\begin{proposition}
\label{local and torsion cohomology don't agree}
Let $V$ be any valuation ring with maximal ideal $\frak m$ that is not finitely generated, and $Z$ be the closed point of $\operatorname{Spec}(V)$. Suppose that the punctured spectrum $\operatorname{Spec}(V) - Z$ is quasi-compact (for instance if $V$ has finite Krull dimension). Then there exists a $V$-module $M$ such that $\Gamma_\frak m(M)$ and $\Gamma_Z(\widetilde{M})$ are not isomorphic $V$-modules. 
\end{proposition}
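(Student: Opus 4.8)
The plan is to exhibit the module explicitly as $M = V/(f)$ for a suitable $f \in \frak m$, and to show that $\Gamma_{\frak m}(M) = 0$ whereas $\Gamma_Z(\widetilde M)$ is all of $M$, which is nonzero; this immediately gives the non-isomorphism. This is, in fact, the module $V/(f)$ appearing in the Remark following Proposition \ref{derived torsion cohomology does not equal local cohomology part 1}, and the underlying mechanism is exactly the one flagged there: the radical of a principal ideal of a valuation ring need not be finitely generated.

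First I would produce $f$. The punctured spectrum $U := \operatorname{Spec}(V) - Z$ is an open subscheme of $\operatorname{Spec}(V)$ and is quasi-compact by hypothesis, so by Lemma \ref{closed and open subsets of spectrum of a valuation ring}(2) there is $f \in V$ with $U = D(f)$. Since $Z$ is the closed point $\{\frak m\}$, this forces $\mathbb V(f) = \{\frak m\}$, so $f \in \frak m$; and $f \neq 0$ because $U \neq \emptyset$ (as $V$ is not a field, its maximal ideal being non-finitely-generated, hence nonzero). Next I would record the standard identifications, valid over an arbitrary (not necessarily Noetherian) ring: for any $V$-module $N$ one has $\widetilde N(\operatorname{Spec} V) = N$ and $\widetilde N(D(f)) = N_f$, with $\operatorname{res}^X_U$ the canonical localization map. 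Hence $\Gamma_Z(\widetilde N) = \ker(N \to N_f) = \Gamma_{(f)}(N)$; in other words the sheaf-theoretic functor $\Gamma_Z$ here computes $(f)$-torsion, not $\frak m$-torsion, and since $\sqrt{(f)} = \frak m$ but $(f) \neq \frak m$, this is precisely where the two functors part ways.

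Then I would carry out the two computations for $M = V/(f)$. On the sheaf side, $(V/(f))_f = V_f/fV_f = 0$ since $f$ becomes a unit after inverting it, so $\Gamma_Z(\widetilde M) = \ker(M \to 0) = M = V/(f)$, which is nonzero because $f \in \frak m$ is a non-unit. On the torsion side, $\frak m$ is not principal (equivalently not finitely generated), so Theorem \ref{derived torsion characterization}(2) identifies $\Gamma_{\frak m}(M)$ with the socle $\{x \in M : \frak m x = 0\} = (fV : \frak m)/fV$, where $(fV : \frak m) := \{a \in V : a\frak m \subseteq fV\}$. I claim $(fV : \frak m) = fV$, whence $\Gamma_{\frak m}(M) = 0$: if $a \in V \setminus fV$ then $v(a) < v(f)$ in the value group $G$, so $\gamma := v(f) - v(a) \in G^+$; because $\frak m$ is not principal, $G^+$ has no least element (Lemma \ref{value groups with least positive element}), so there is $m \in \frak m$ with $0 < v(m) < \gamma$, and then $v(am) = v(a) + v(m) < v(f)$, so $am \notin fV$, i.e. $a \notin (fV : \frak m)$. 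Therefore $\Gamma_{\frak m}(M) = 0 \neq V/(f) = \Gamma_Z(\widetilde M)$, and these $V$-modules are not isomorphic.

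I do not expect a genuine obstacle here. The only step with any content is the socle computation, i.e. observing that over a non-principal $\frak m$ one can always squeeze an element of $\frak m$ strictly below a prescribed positive value of $G$; this follows from Lemma \ref{value groups with least positive element} together with the trivial fact that a totally ordered set without a least element has an element strictly below any prescribed one. Everything else is bookkeeping with the identities $\widetilde N(D(f)) = N_f$ and $\Gamma_Z(\widetilde N) = \ker(N \to N_f)$.
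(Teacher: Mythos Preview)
Your proof is correct and follows essentially the same approach as the paper: both pick $f$ via Lemma~\ref{closed and open subsets of spectrum of a valuation ring}(2), set $M = V/(f)$, invoke Theorem~\ref{derived torsion characterization}(2) to identify $\Gamma_{\mathfrak m}(M)$ with the socle, and use $\sqrt{(f)} = \mathfrak m$ but $(f) \subsetneq \mathfrak m$. The only cosmetic difference is that you compute both sides completely ($\Gamma_{\mathfrak m}(M) = 0$ via the colon-ideal argument, $\Gamma_Z(\widetilde M) = M$), whereas the paper simply observes that the class of $1$ lies in $\Gamma_Z(\widetilde M)$ but not in $\Gamma_{\mathfrak m}(M)$ and concludes non-isomorphism from the fact that $\Gamma_{\mathfrak m}(M)$ is annihilated by $\mathfrak m$ while that element is not.
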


\begin{proof}
Since $\textrm{Spec}(V) - Z$ is quasi-compact, by Lemma \ref{closed and open subsets of spectrum of a valuation ring} there exists $f \in V$ such that 
$$\textrm{Spec}(V) - Z = D(f).$$
Then $f \in \frak m$, but it is not contained in any prime ideal of Spec($V$) that is not maximal.
Let $M$ be the $V$-module $V/(f)$, and let $x \in M$ denote the class of $1 \in V$. The annihilator of $x$ is $(f)$, and since $\frak m$ is not finitely generated, $(f) \subsetneq \frak m$. Thus, $x$ is not an element of $\Gamma_\frak m(M)$, because $\Gamma_\frak m(M) = \{y \in M: \frak{m}y = 0\}$ (Theorem \ref{derived torsion characterization}(2)). However, for all prime ideals $p$ of $\textrm{Spec}(V)$ that are not maximal, we have $M_p = 0$. Considering $x$ as a global section of the associated sheaf $\widetilde{M}$, it follows that its support is contained in $Z$, that is $x \in \Gamma_Z(\widetilde{M})$. Then $\Gamma_\frak m(M)$ and $\Gamma_Z(\widetilde{M})$ cannot be isomorphic $V$-modules because every element of $\Gamma_\frak m(M)$ is annihilated by $\frak m$, whereas $x$ is not.
\end{proof}

\vspace{1mm}

\begin{remark}
Let $V$ be a valuation ring of finite Krull dimension with non-zero principal (equivalently finitely generated) maximal ideal $m$. Then $\Gamma_\frak m(M) = \Gamma_{\mathbb{V}(\frak m)}(\widetilde{M})$, and Theorem \ref{vanishing theorem for local cohomology for valuation rings}, combined with Theorem \ref{derived torsion characterization}(1) implies that $R^i\Gamma_\frak m(M)$ and $H^i\Gamma_{\mathbb{V}(\frak m)}(\widetilde{M})$ are isomorphic for all $i \geq 0$.   
\end{remark}

\vspace{1mm}

\section{\textbf{A valuation ring of infinite Krull dimension}}
\label{an interesting example}

%We end this article by giving an example a valuation ring of infinite Krull dimension for which Lemma \ref{closed and open subsets of spectrum of a valuation ring}(3) and Theorem \ref{vanishing theorem for local cohomology for valuation rings} fail. 
We construct a valuation ring $V$ of \emph{infinite Krull dimension} such that 
\begin{enumerate}
\item[(a)] Spec($V$) has an open subscheme that is not affine (i.e. Lemma \ref{closed and open subsets of spectrum of a valuation ring}(3) fails if Krull dimension is not finite).
\item[(b)] There exists a sheaf of abelian groups $\mathcal F$ on Spec($V$) for which $H^i\Gamma_Z(\mathcal F)$ does not vanish for some $i \geq 2$, and the closed point $Z$ (i.e. Theorem \ref{vanishing theorem for local cohomology for valuation rings}(2) fails if Krull dimension is not finite).
\end{enumerate}
In fact, in (b) we can even choose $\mathcal F$ to be the structure sheaf. Another interesting feature of this example is that the $\frak m$-torsion cohomology functors $R^i\Gamma_\frak m$ associated to this ring vanish for $i \geq 3$. Our construction is inspired by \cite[Exercises 3.3.26 and 3.3.27]{Liu06}. 
\vspace{1mm}

For the remainder of this section, $K$ will denote the field $\mathbb{C}(X_1, X_2, X_3, \dots, X_n, \dots)$, where the $X_n$ are indeterminates for all $n \in \mathbb{N}$. Let $G := \bigoplus_{n \in \mathbb{N}} \mathbb{Z}$, ordered lexicographically. The $i^{th}$ standard $\mathbb{Z}$-basis element of $G$ will be denoted $e_i$. So in the lexicographical ordering, $e_i > e_j$ if and only if $i < j$. There exists a unique valuation $v$ on $K/\mathbb{C}$ with value group $G$ such that $v(X_i) = e_i$. Let $V$ be the corresponding valuation ring, and $\frak m$ the maximal ideal of $V$. 

\begin{proposition}
\label{a valuation ring of infinite Krull dimension not satisfying the vanishing of local cohomology}
The valuation ring $V$ constructed above satisfies the following properties:
\begin{enumerate}
\item[(1)] $V$ has infinite Krull dimension.
\item[(2)] The maximal ideal $\frak m$ is not finitely generated. In particular, $\frak m = (X_1, X_2, X_3, \dots)$.
\item[(3)] The punctured spectrum $\operatorname{Spec}(V) - \{\frak m\}$ is not quasi-compact, hence is not affine.
\item[(4)] If $Z$ is the closed point of $\operatorname{Spec}(V)$, then $H^2\Gamma_Z(\mathcal{O}_{\operatorname{Spec}(V)}) \neq 0$.
\item[(5)] As a $V$-module, the residue field $\kappa$ has projective dimension $2$. 
\item[(6)] For all $V$-modules $M$, $R^i\Gamma_\frak m(M) = 0$ for all $i \geq 3$, where $R^i\Gamma_\frak m$ is the $i^{th}$ $\frak m$-torsion cohomology functor (see Section \ref{torsion cohomology}). %However, $R^1\Gamma_m(m) \neq 0$, and there exists a $V$-module $M$ such that $R^2\Gamma_m(M) \neq 0$.

\end{enumerate}
\end{proposition}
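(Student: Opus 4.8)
The plan is to read off properties (1)--(3) directly from the dictionary between primes of $V$ and convex subgroups of its value group $G=\bigoplus_{n\in\mathbb{N}}\mathbb{Z}$ (lexicographically ordered), to deduce (5)--(6) as short consequences of Theorems \ref{some cases when the residue field has projective dimension 2} and \ref{derived torsion characterization}, and to spend the real effort on (4).

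First I would describe the convex subgroups of $G$: they are exactly $G=H_0\supsetneq H_1\supsetneq H_2\supsetneq\cdots$ together with $\{0\}=\bigcap_k H_k$, where $H_k$ is the set of sequences supported in coordinates $>k$. Since each $H_k\neq\{0\}$ (it contains $e_{k+1}$), the order-reversing bijection with $\operatorname{Spec}(V)$ exhibits it as an infinite chain $(0)=p_{H_0}\subsetneq p_{H_1}\subsetneq\cdots$ together with $\mathfrak{m}=p_{\{0\}}$; this is (1). For (2), I would first show $\mathfrak{m}=(X_1,X_2,\dots)$: if $0<g\in G$ has its first nonzero coordinate (necessarily $>0$) in position $i$, a direct lexicographic comparison gives $g\ge e_i$ or $g\ge e_{i+1}$, so $g\in v\big((X_i)\big)\cup v\big((X_{i+1})\big)$, while conversely $v(X_n)=e_n>0$. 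Since $(X_1)\subseteq(X_2)\subseteq\cdots$ is a \emph{strictly} increasing chain of principal ideals and a finitely generated ideal of a valuation ring is principal, $\mathfrak{m}$ cannot be finitely generated (else $\mathfrak{m}=(X_n)$ would make $e_n$ least in $G^+$, impossible as $0<e_{n+1}<e_n$). For (3), note $U:=\operatorname{Spec}(V)\setminus\{\mathfrak{m}\}=\bigcup_n D(X_n)$ is an increasing union; were $U$ quasi-compact it would equal $D(f)$ for some $f\in V$ by Lemma \ref{closed and open subsets of spectrum of a valuation ring}(2), and $f\in\mathfrak{m}=(X_1,X_2,\dots)$ forces $f\in(X_n)$ for some $n$, hence $D(f)\subseteq D(X_n)\subsetneq U$ (the prime $p_{H_n}$ lies in $U\setminus D(X_n)$), a contradiction; an affine scheme being quasi-compact, $U$ is not affine.

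Parts (5) and (6) are then immediate. The sequence $(e_n)_n$ is strictly decreasing in $G^+$ and $G^+=\bigcup_n G^{\ge e_n}$ by the comparison above, so $G$ is countably exhaustive; hence $\operatorname{pd}_V(\kappa)\le 2$ by Theorem \ref{some cases when the residue field has projective dimension 2}(2), while $\operatorname{pd}_V(\kappa)>1$ by part (1) of the same theorem since $G^+$ has no least element, giving $\operatorname{pd}_V(\kappa)=2$ and hence (5). Since $\mathfrak{m}$ is not principal, Theorem \ref{derived torsion characterization}(2) gives $R^i\Gamma_{\mathfrak{m}}(M)\cong\operatorname{Ext}^i_V(\kappa,M)$ for all $i$ and all $V$-modules $M$, and this vanishes for $i\ge 3$ because $\operatorname{pd}_V(\kappa)=2$; this is (6), and it is worth stressing that Corollary \ref{vanishing of torsion cohomology for finite dimensional valuation rings} does not apply here since $V$ has infinite Krull dimension.

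The crux is (4). By Theorem \ref{vanishing theorem for local cohomology for valuation rings}(1) we have $H^2\Gamma_Z(\mathcal{O}_X)\cong H^1(U,\mathcal{O}_X|_U)$, so it suffices to show $H^1(U,\mathcal{O}_U)\neq 0$. Writing $U=\bigcup_n U_n$ with $U_n=D(X_n)=\operatorname{Spec}(V_{X_n})$, each $V_{X_n}$ is a localization of $V$, hence a valuation ring with only finitely many primes, so it is local and $H^q(U_n,\mathcal{F})=0$ for $q\ge 1$ by Lemma \ref{sheaf cohomology vanishes for local rings}; moreover $V_{X_1}=K$. Since $\Gamma(U,-)=\varprojlim_n\Gamma(U_n,-)$ (the $U_n$ form a nested open cover) and flasque sheaves have surjective restriction maps, a standard Milnor-type $\varprojlim^1$ exact sequence reduces this to $H^1(U,\mathcal{O}_U)\cong\varprojlim_n^1 V_{X_n}$, where the tower has the evident inclusions $V_{X_{n+1}}\hookrightarrow V_{X_n}$ as transition maps. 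To see this $\varprojlim^1$ is nonzero I would apply $\varprojlim$ to the short exact sequence of towers $0\to(V_{X_n})_n\to(K)_n\to(K/V_{X_n})_n\to 0$; using $\bigcap_n V_{X_n}=V$ and the vanishing of $\varprojlim^1$ of a constant tower, the resulting exact sequence identifies $\varprojlim_n^1 V_{X_n}$ with $\operatorname{coker}\!\big(K/V\to\varprojlim_n K/V_{X_n}\big)$. Finally I would exhibit an element of $\varprojlim_n K/V_{X_n}$ not coming from $K$: the partial sums $g_n:=\sum_{k=1}^{n-1}1/X_k$ satisfy $g_{n+1}-g_n=1/X_n\in V_{X_n}$, hence define a compatible system, but no $g\in K$ can satisfy $g-g_n\in V_{X_n}$ for all $n$, because $g$ involves only finitely many variables $X_1,\dots,X_N$, and then a valuation computation at $n=N+3$ -- comparing the valuations of $g-g_{N+1}$, $1/X_{N+1}$ and $1/X_{N+2}$ -- produces a negative entry among the first $N+2$ coordinates of $v(g-g_{N+3})$, so $g-g_{N+3}\notin V_{X_{N+3}}$. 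This gives $H^1(U,\mathcal{O}_U)\neq 0$, proving (4). The main obstacle is precisely this last step: justifying the increasing-union $\varprojlim^1$ sequence and certifying the non-vanishing of $\varprojlim^1$ by producing the explicit ``Cauchy but non-convergent'' system.
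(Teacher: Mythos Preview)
Your proposal is correct. For parts (1)--(3), (5), and (6) it is essentially the paper's argument with cosmetic differences (you enumerate primes via convex subgroups $H_k$ where the paper takes radicals $P_n=\sqrt{(X_n)}$; these are the same primes).

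For (4) the two routes diverge in packaging but converge on the same computation. Both reduce via Theorem~\ref{vanishing theorem for local cohomology for valuation rings}(1) to $H^1(U,\mathcal{O}_U)\neq 0$, and both finish by showing that the map $K\to\varprojlim_n K/V_{X_n}$ misses the compatible family given by the partial sums $\sum_{k<n}X_k^{-1}$, via the same valuation estimate. The difference is how that target is reached: the paper takes the sheaf exact sequence $0\to\mathcal{O}_Y\to\widetilde{K}\to\widetilde{K}/\mathcal{O}_Y\to 0$, uses flasqueness of $\widetilde{K}$ to kill $H^1(U,\widetilde{K}|_U)$, and identifies $(\widetilde{K}/\mathcal{O}_Y)(U)$ with $\varprojlim_n K/V_{X_n}$ (its Claim). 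You instead invoke the Milnor $\varprojlim^1$ sequence for the nested cover $U=\bigcup_n D(X_n)$ (legitimate here since injective sheaves are flasque, so the towers $\Gamma(U_n,\mathcal{I}^j)$ are Mittag--Leffler) to get $H^1(U,\mathcal{O}_U)\cong\varprojlim^1_n V_{X_n}$, and then read off $\varprojlim^1_n V_{X_n}\cong\operatorname{coker}(K\to\varprojlim_n K/V_{X_n})$ from the six-term sequence of the tower $0\to V_{X_n}\to K\to K/V_{X_n}\to 0$ together with $\bigcap_n V_{X_n}=V$. Your route is a touch more homological and makes the role of $\varprojlim^1$ explicit; the paper's is more hands-on sheaf theory and avoids appealing to the Milnor sequence. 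Either way the substantive content is the explicit ``Cauchy but not convergent'' element, which you handle correctly.
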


\begin{proof}
For the rest of the proof let
$$Y := \textrm{Spec}(V); \hspace{2mm} Z = \{\frak m\}; \hspace{2mm} U := \textrm{Spec}(V) - \{\frak m\}.$$

\vspace{1mm}

(1) We have
$$v(X_1)  > v(X_2) > v(X_3) > \dots,$$
which gives us a chain of ideals

\begin{equation}
\label{infinite chain of ideals}
(X_1) \subsetneq (X_2) \subsetneq (X_3) \subsetneq \dots \hspace{1mm} .
\end{equation}

Define
\begin{equation}
\label{definition of prime ideals}
P_n := \textrm{radical of the ideal} \hspace{1mm} (X_n).
\end{equation}
Then $P_n$ is a prime ideal because the radical of a proper ideal of a valuation ring is prime (see proof of Lemma \ref{closed and open subsets of spectrum of a valuation ring}(1) for an explanation). Since every power of $X_{n+1}$ has value strictly less that the value of $X_n$, it follows that $X_{n+1}$ is an element of $P_{n+1}$, but not of $P_n$. So we get an infinite chain of prime ideals
\begin{equation}
\label{infinite chain of primes}
P_1 \subsetneq P_2 \subsetneq P_3 \subsetneq \dots,
\end{equation}
which shows that $V$ has infinite Krull dimension, proving (1).

\vspace{1mm}

Using the lex ordering on $G$, it is easy to see that
$$\frak m = (X_1, X_2, X_3, \dots).$$
Hence the maximal ideal $\mathfrak m$ cannot be finitely generated, because then $\mathfrak m$ would equal $(X_i)$ for some $X_i$, which is impossible since $X_{i+1}$ would not be in $\mathfrak m$. This proves (2).

\vspace{1mm}

As a consequence of (2), we see that $U = \textrm{Spec}(V) - \mathbb{V}(\frak m) = \bigcup_{n \in \mathbb{N}} D(X_i)$. Now from (\ref{infinite chain of ideals}) we get 
$$D(X_1) \subseteq D(X_2) \subseteq D(X_3) \subseteq \dots \hspace{1mm} .$$
Since the prime ideal $P_n$ defined in (\ref{definition of prime ideals}) is an element of $D(X_{n+1})$ but not of $D(X_n)$, the inclusions $D(X_n) \subseteq D(X_{n+1})$ are strict, that is we actually have a chain of open sets
$$D(X_1) \subsetneq D(X_2) \subsetneq D(X_3) \subsetneq \dots \hspace{1mm} .$$ 
Thus, the punctured spectrum $U$ cannot be quasi-compact, because the open cover $\{D(X_n): n \in \mathbb{N}\}$ cannot have a finite sub-cover, proving (3).

\vspace{1mm}

The proof of (4) will require some work. Using Theorem \ref{vanishing theorem for local cohomology for valuation rings}, we get $H^2\Gamma_Z(\mathcal{O}_Y) \cong  H^1(U, \mathcal{O}_Y|_U)$. Hence to prove (2), it suffices to show that $H^1(U, \mathcal{O}_Y|_U) \neq 0$.

\vspace{1mm}

Let $\widetilde{K}$ denote the constant sheaf of rational functions on $Y$. Note that $\mathcal{O}_Y$ may be identified as a subsheaf of $\widetilde{K}$, and we make this identification. We get a short exact sequence of quasi-coherent sheaves of $\mathcal{O}_Y$-modules
$$0 \rightarrow \mathcal{O}_Y \rightarrow \widetilde{K} \rightarrow \widetilde{K}/\mathcal{O}_Y \rightarrow 0.$$
Restricting to the punctured spectrum $U$ gives us a corresponding short exact sequence of quasi-coherent sheaves on $U$
$$0 \rightarrow \mathcal{O}_Y|_U \rightarrow \widetilde{K}|_U \rightarrow (\widetilde{K}/\mathcal{O}_Y)|_U \rightarrow 0.$$
This gives a corresponding long-exact sequence in cohomology whose initial terms are
\begin{equation}
\label{long exact sequence in counter example}
0 \rightarrow \mathcal{O}_Y(U) \rightarrow K \rightarrow \widetilde{K}/\mathcal{O}_Y(U) \rightarrow H^1(U, \mathcal{O}_Y|_U) \rightarrow H^1(U, \widetilde{K}|_U) \rightarrow \dots \hspace{1mm} .
\end{equation}
To prove that $H^1(U, \mathcal{O}_Y|_U) \neq 0$, it suffices to show that the map $K \rightarrow (\widetilde{K}/\mathcal{O}_Y)(U)$ is not surjective. For this we need to develop a better understanding of the $\mathcal{O}_Y(U)$-module $(\widetilde{K}/\mathcal{O}_Y)(U)$.

\vspace{1mm}

\begin{claim}
\label{claim} 
$(\widetilde{K}/\mathcal{O}_Y)(U)$ is the limit (a.k.a inverse limit) of the diagram
$$ \dots \twoheadrightarrow \frac{K}{V_{X_3}} \twoheadrightarrow \frac{K}{V_{X_2}} \twoheadrightarrow \frac{K}{V_{X_1}}.$$
\end{claim}

\vspace{1mm}

The claim is not difficult to prove, but to prevent breaking the flow we postpone it until after the proof of this proposition. Note that 
$$K/V_{X_n} = (\widetilde{K}/\mathcal{O}_Y)(D(X_n)).$$ 
It is easy to check that 
$$K \rightarrow (\widetilde{K}/\mathcal{O}_Y)(U)$$
is the unique map such that for all $n \in \mathbb{N}$,
$$K \rightarrow (\widetilde{K}/\mathcal{O}_Y)(U) \xrightarrow{res^U_{D(X_n)}} K/V_{X_n} = K \twoheadrightarrow K/V_{X_n},$$
where $K \twoheadrightarrow K/V_{X_n}$ is the usual projection. We now explicitly construct an element of the limit of
$$ \dots \twoheadrightarrow \frac{K}{V_{X_3}} \twoheadrightarrow \frac{K}{V_{X_2}} \twoheadrightarrow \frac{K}{V_{X_1}}$$
which cannot be in the image of $K$, completing the proof that $H^2\Gamma_Z(\mathcal{O}_Y) \neq 0$. %This will show that $K \rightarrow (\widetilde{K}/\mathcal{O}_Y)(U)$ is not surjective,  

\vspace{1mm}

For $n \geq 2$,  
$$X_1^{-1}, \dots, X_{n-1}^{-1} \notin V_{X_n},$$ 
as otherwise some power of $X_n$ would be divisible by $X_i$, for some $i < n$. At the same time $X_i^{-1}$ is an element of $V_{X_n}$, for all $i \geq n$. Define 
$$\alpha_1 := 0,$$ 
and 
$$\alpha_n \hspace{1mm} \textrm{:= class of} \hspace{1mm} X_1^{-1} + \dots + X_{n-1}^{-1} \hspace{1mm} \textrm{in} \hspace{1mm} K/V_{X_n},$$ 
for all $n \geq 2$. Then  
$$(\alpha_1, \alpha_2, \alpha_3, \dots) \in \textrm{lim}_{n \in \mathbb{N}} K/V_{X_n}.$$
Assume for contradiction that $(\alpha_1, \alpha_2, \alpha_3, \dots)$ is the image of some $\alpha \in K$. There exists $n >>0$ such that 
$$\alpha \in \mathbb{C}(X_1, \dots, X_n),$$ 
and by our assumption, 
$$\alpha- \alpha_{n+2} = \alpha - (X_1^{-1} + \dots + X_{n+1}^{-1}) \in V_{X_{n+2}}.$$
Note $\alpha - X_1^{-1} - \dots - X_n^{-1}$ is also an element of field $\mathbb{C}(X_1,\dots,X_n)$, and either $\alpha - X_1^{-1} - \dots - X_n^{-1} = 0$ or $v(\alpha - X_1^{-1} - \dots - X_n^{-1}) \neq v(X_{n+1}^{-1})$. Because $X_{n+1}^{-1}$ is not an element of $V_{X_{n+2}}$, $\alpha - X_1^{-1} - \dots - X_n^{-1}$ cannot equal $0$. Thus, 
$$v(\alpha - X_1^{-1} - \dots - X_n^{-1}) \neq v(X_{n+1}^{-1}).$$
Since $v$ is a valuation, this tells us that 
$$v\big{(}\alpha - (X_1^{-1} + \dots + X_{n+1}^{-1})\big{)} = \textrm{min}\{v(\alpha - X_1^{-1} - \dots - X_n^{-1}), v(X_{n+1}^{-1})\}.$$ 
In particular, $v(\alpha - (X_1^{-1} + \dots + X_{n+1}^{-1})) \leq v(X_{n+1}^{-1})$, and so for all $m \in \mathbb{N} \cup \{0\}$ we must have 
$$v\bigg{(}X_{n+2}^m\big{(}\alpha - (X_1^{-1} + \dots + X_{n+1}^{-1})\big{)}\bigg{)} \leq v(X_{n+2}^mX_{n+1}^{-1}) < 0.$$
This contradicts 
$$\alpha - (X_1^{-1} + \dots + X_{n+1}^{-1})$$ 
being an element of $V_{X_{n+2}}$, completing the proof of (4).

\vspace{1mm}

We can, for this example, give a nice characterization of  $H^2\Gamma_Z(\mathcal{O}_Y)$. Recall that $H^2\Gamma_Z(\mathcal{O}_Y) \cong H^1(U, \mathcal{O}_Y|_U)$, and since $H^1(U, \widetilde{K}|_U) = 0$ on account of $\widetilde{K}|_U$ being a flabby sheaf on $U$, from the exactness of (\ref{long exact sequence in counter example}) it follows that $H^1(U, \mathcal{O}_Y|_U)$ is the cokernel of the map
$$K \rightarrow (\widetilde{K}/\mathcal{O}_Y)(U).$$
Thus, $H^2\Gamma_Z(\mathcal{O}_Y) \cong \textrm{coker}(K \rightarrow (\widetilde{K}/\mathcal{O}_Y)(U))$.

\vspace{1mm}

It remains to show (5) and (6). Note that $G = \bigoplus_{n \in \mathbb{N}} \mathbb{Z}$ with the lex order is countably exhaustive (see Definition \ref{countably exhaustive definition}), because the sequence formed by the basis vectors $(e_i)_{i \in \mathbb{N}}$ satisfies 
$$e_1 > e_2 > e_3 > \dots \hspace{1mm}  \textrm{and} \hspace{1mm} G^+ = \bigcup_{i \in \mathbb{N}} G^{\geq e_i}.$$
Also, $G^+$ clearly does not have a least element. Then (5) follows from Theorem \ref{some cases when the residue field has projective dimension 2}.
For (6) one can apply the proof of Theorem \ref{vanishing of torsion cohomology for finite dimensional valuation rings} verbatim, so we omit it.
\end{proof}

\vspace{1mm}

To complete the proof of the above proposition, it remains to establish Claim \ref{claim}.

\begin{proof}
[\textbf{Proof of Claim \ref{claim}}]
Let $\mathcal{A}$ be the partially ordered set whose elements are open subsets of the form $D(f)$ contained in the punctured spectrum $U$, and where the order relation is given by inclusion. In fact, $\mathcal{A}$ is totally ordered by this relation, hence in particular also a directed set. If $D(g) \subseteq D(f) \subset U$, then we have a natural map 
$$\frac{K}{\mathcal{O}_Y(D(f))} \twoheadrightarrow \frac{K}{\mathcal{O}_Y(D(g))},$$ 
induced by the restriction map $\mathcal{O}_Y(D(f)) \hookrightarrow \mathcal{O}_Y(D(g))$. This is the data of an inverse system on $\mathcal{A}$. It is well-known that 
$$(\widetilde{K}/\mathcal{O}_Y)(U) = \textrm{lim}_{\mathcal{A}} (\widetilde{K}/\mathcal{O}_Y)(D(f)) = \textrm{lim}_{\mathcal{A}} {K}/\big{(}{\mathcal{O}_Y(D(f))}\big{)}.$$

Let $\mathcal{I}$ be the subset of $\mathcal{A}$ consisting of the open sets $D(X_n)$ for $n \in \mathbb{N}$. Recall it was shown in Proposition \ref{a valuation ring of infinite Krull dimension not satisfying the vanishing of local cohomology} that $U = \bigcup_{n \in \mathbb{N}} D(X_n)$. Then $\mathcal{I}$ is cofinal in $\mathcal{A}$. This is because if $D(f)$ is any open set contained in $U$, there has to exist an $X_i$ such that $D(f) \subseteq D(X_i)$. Otherwise, $D(X_n) \subseteq D(f)$ for all $n$ since any two open subsets of $\textrm{Spec}(V)$ are comparable, and so, $D(f) = U$, which contradicts the non-quasicompactness of $U$ (Proposition \ref{a valuation ring of infinite Krull dimension not satisfying the vanishing of local cohomology}(2)). From the cofinality of $\mathcal{I}$ it follows that
$$\textrm{lim}_{\mathcal{A}} {K}/({\mathcal{O}_Y(D(f))}) = \textrm{lim}_{\mathcal{I}} {K}/({\mathcal{O}_Y(D(X_n))}).$$
But the latter is precisely the limit of
\[ \dots \twoheadrightarrow \frac{K}{V_{X_3}} \twoheadrightarrow \frac{K}{V_{X_2}} \twoheadrightarrow \frac{K}{V_{X_1}}. \qedhere \]
\end{proof}

%\section{\textbf{Concluding remarks}}
%\label{falsehood of Key Lemma in section 6}
%What can one can say about the cohomology groups $H^i(U, \mathcal F)$ for $i > 1$, when $U$ is a non-quasicompact open subscheme of the spectrum of a valuation ring that does not have finite Krull dimension? For instance, are these groups non-zero for the structure sheaf of the punctured spectrum in the example given above? 

\vspace{3mm}

\bibliographystyle{amsalpha}
%\bibliography{Bibliography}

%\nocite{stacks-project, ILLMMS07, Kuh00}

\footnotesize

\end{document}